\def\theequation{\@arabic\c@equation}
\newcommand{\bbN}{{\mathbb{N}}}
\newcommand{\bbR}{{\mathbb{R}}}
\newcommand{\R}{{\mathbb{R}}}
\newcommand{\bbQ}{{\mathbb{Q}}}
\newcommand{\bbZ}{{\mathbb{Z}}}
\newcommand{\Z}{{\mathbb{Z}}}
\newcommand{\bbH}{{\mathbb{H}}}
\newcommand{\bbC}{{\mathbb{C}}}
\newcommand{\cB}{{\mathcal B}}
\newcommand{\cD}{{\mathcal D}}
\newcommand{\cE}{{\mathcal E}}
\newcommand{\cF}{{\mathcal F}}
\newcommand{\cH}{{\mathcal H}}
\newcommand{\cI}{{\mathcal I}}
\newcommand{\cJ}{{\mathcal J}}
\newcommand{\cM}{{\mathcal M}}
\newcommand{\cN}{{\mathcal N}}
\newcommand{\cO}{{\mathcal O}}
\newcommand{\cR}{{\mathcal R}}
\newcommand{\cS}{{\mathcal S}}
\newcommand{\cU}{{\mathcal U}}
\newcommand{\cV}{{\mathcal V}}
\newcommand{\cW}{{\mathcal W}}
\newcommand{\cZ}{{\mathcal Z}}
\newcommand{\bfi}{{\bf i}}
\newcommand{\no}{\nonumber}
\newcommand{\lb}{\label}
\newcommand{\wti}{\widetilde  }
\newcommand{\hatt}{\widehat}
\numberwithin{equation}{section}
\newcommand{\dom}{\operatorname{dom}}
\newcommand{\supp}{\operatorname{supp}}
\renewcommand{\Re}{\operatorname{Re }}
\renewcommand\Im{\operatorname{Im}}
\newtheorem{theorem}{Theorem}[section]
\newtheorem{lemma}[theorem]{Lemma}
\newtheorem{proposition}[theorem]{Proposition}
\theoremstyle{definition}
\newtheorem{example}[theorem]{Example}
\newtheorem{remark}[theorem]{Remark}
\begin{document}
	\begin{abstract}
	 We introduce a dynamically defined class of unbounded, connected, equilateral metric graphs on which the Kirchhoff Laplacian has zero Lebesgue measure spectrum and a nontrivial singular continuous part. A new local Borg--Marchenko uniqueness result is obtained in order to utilize Kotani theory for aperiodic subshifts satisfying Boshernitzan's condition.
	\end{abstract}
	
	\allowdisplaybreaks
	
	\title[Kirchhoff Laplacian with Zero Measure Spectrum]{Zero Measure and Singular Continuous  Spectra for Quantum Graphs}

	\author[D.\ Damanik]{David Damanik}
	\address{ Department of Mathematics, Rice University, Houston, TX 77005, USA}
	\email{{damanik@rice.edu}}
	\thanks{D.D.\ was supported in part by NSF grant DMS--1700131. }
	
	\author[L.\ Fang]{Licheng Fang}
	\address{Ocean University of China, Qingdao 266100, Shandong, China and Rice University, Houston, TX 77005, USA}
	\email{{lf18@rice.edu}}
	\thanks{L.F.\ was supported by NSFC (No. 11571327) and the Joint PhD. Scholarship Program of Ocean University of China.}
	
	\author[S.\ Sukhtaiev]{Selim Sukhtaiev}
	\address{ Department of Mathematics, Rice University, Houston, TX 77005, USA}
	\email{{sukhtaiev@rice.edu}}
	\thanks {S.S.\ was supported in part by an AMS-Simons travel grant, 2017-2019}
	\maketitle
	{\scriptsize{\tableofcontents}}
	\section{Introduction}
	\subsection{Overview} The spectral theory of Schr\"odinger operators with irregular potentials has been of great interest in mathematical physics since the 1950's.  A large number of models have been treated rigorously in the setting of the one-dimensional Laplacian perturbed by an irregular potential. As far as multidimensional phenomena are concerned, only a small fraction of the expected results has been proved. In this paper we focus on structures of intermediate dimensionality -- continuum metric graphs.  Specifically, we study the Kirchhoff Laplacian on aperiodic infinite volume graphs and demonstrate that they exhibit nontrivial spectral behavior.
	
	The first example of a quantum graph with ``exotic"  spectrum is due to Simon, cf. \cite{S96} where an infinite combinatorial graph with singular continuous spectrum was constructed.  Typically, interesting spectral phenomena (e.g., Anderson localization) occur due to irregular lower order perturbations of a fixed second order operator, which is often the Laplacian.   In the setting of graphs, however, it is natural to consider another type of perturbation -- by geometry. For example, the graph in Figure \ref{Fig1} may be viewed as a geometric perturbation, by inserting  diamond tiles, of the half line.  The latter has purely absolutely continuous spectrum $[0,\infty)$, while the former may exhibit all kinds of spectra depending on how the tiles are inserted. This paper concerns aperiodic geometric perturbations (of more general graphs) leading to zero Lebesgue measure spectra, a scenario of interest in the modeling of quasicrystals, \cite{DL2, LSS}.
	
	In the past two decades aperiodic Schr\"odinger operators on continuum and combinatorial graphs have attracted a significant amount of attention. The main interest has been around the relation between the geometry of graphs and their spectra. Let us mention some relevant results. In \cite{Br07} Breuer constructed a sparse combinatorial tree graph with singular continuous spectrum. This result was extended to the continuum setting by Breuer and Frank in \cite{BF}, where it was also shown that the singular continuous spectrum is in fact generic. Next, regular trees provided some insight into the Anderson model. For instance, delocalization in the regime of low disorder on the Bethe lattice was established by Klein in \cite{K3}. A new spectral behavior for the tree Anderson Hamiltonian near spectral edges was discovered by Aizenman and Warzel, \cite{AW11, AW13}. Dynamical localization for radial trees with disordered branching numbers and edge lengths was established by Damanik, Fillman, and Sukhtaiev in \cite{DFS, DS}. Periodicity of radial trees in the presence of absolutely continuous spectrum was shown by Exner, Seifert, and Stollmann in \cite{ESS}. Grigorchuk, Lenz, and Nagnibeda recently proved zero measure spectrum for {\it discrete} Laplacians on certain aperiodic graphs, cf. \cite{GLN}. To the best of our knowledge, continuum Kirchhoff Laplacians on {\it nontrivial}\footnote{noncompact and connected}  graphs with zero measure spectrum and nonempty singular continuous part have not yet been discussed in the literature. Our goal is to address this issue by combining a recent work of Kostenko and Nicolussi \cite{KN} (see also \cite{BKe, BL}) and the theory of ergodic Schr\"odinger operators \cite{CL, D2, DF}.
			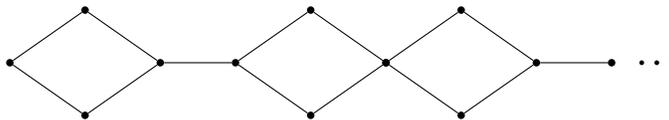
\begin{figure}[h]
		\centering
		\begin{tikzpicture}
		\node[fill=black,circle,scale=0.3](1) at (1,0){};
		\node[fill=black,circle,scale=0.3](2) at (2,0.7){};
		\node[fill=black,circle,scale=0.3](3) at (2,-0.7){};
		\node[fill=black,circle,scale=0.3](4) at (3,0){};
		\node[fill=black,circle,scale=0.3](5) at (4,0){};
		\node[fill=black,circle,scale=0.3](6) at (5,0.7){};
		\node[fill=black,circle,scale=0.3](7) at (5,-0.7){};
		\node[fill=black,circle,scale=0.3](8) at (6,0){};
		\node[fill=black,circle,scale=0.3](9) at (6,0){};
		\node[fill=black,circle,scale=0.3](10) at (7,0.7){};
		\node[fill=black,circle,scale=0.3](11) at (7,-0.7){};
		\node[fill=black,circle,scale=0.3](12) at (8,0){};
		\node[fill=black,circle,scale=0.3](13) at (9,0){};
		\draw[-,thin] (1) to (2);
		\draw[-,thin] (1) to (3);
		\draw[-,thin] (2) to (4);
		\draw[-,thin] (3) to (4);
		\draw[-,thin] (4) to (5);
		\draw[-,thin] (5) to (6);
		\draw[-,thin] (5) to (7);
		\draw[-,thin] (6) to (8);
		\draw[-,thin] (7) to (8);
		\draw[-,thin] (9) to (10);
		\draw[-,thin] (9) to (11);
		\draw[-,thin] (10) to (12);
		\draw[-,thin] (11) to (12);
		\draw[-,thin] (12) to (13);
		\filldraw (9.8,0) circle (.7pt);
		\filldraw (9.4,0) circle (.7pt);
		\filldraw (9.6,0) circle (.7pt);
		\end{tikzpicture}
		\caption{Fibonacci sphere numbers,\ $s=\{12112121...\}$}\lb{Fig1}
	\end{figure}

	\subsection{Setup} Let $\Gamma=(\cV,\cE)$ be a connected graph with root $o\in\cV$.  For $v\in\cV$, let $|v|$ denote the combinatorial length of the shortest path connecting $v$ and $o$. The combinatorial sphere of radius $n\geq0$ is given by $S_n:=\{v\in\cV: |v|=n\}$, the cardinality of this sphere is denoted by $s_n:=|S_n|$. In this paper we focus on graphs $\Gamma$ satisfying the following: Two vertices $u,v\in\cV$ are adjacent if and only if
	\[(u,v)\in (S_0\times S_1)\bigcup_{n\geq 1}S_n\times (S_{n-1}\cup S_{n+1}),\] see, e.g., Figure  \ref{Fig1} where $s_0=1$, $s_1=2$, $s_3=1$.
	We equip $\Gamma$ with a metric by assigning each edge $e\in\cE$  length $1$. A natural orientation is determined by the growth of spheres $S_n$. The Kirchhoff Laplacian on $\Gamma$ is defined by
	\begin{align}
	\begin{split}\lb{new1.1}
	&\bbH:\dom(\bbH)\subset L^2(\Gamma)\rightarrow L^2(\Gamma), \bbH u:= -u'',\ u\in\dom(\bbH),\\
	&\dom(\bbH)=\{f\in \hatt H^2(\Gamma)\cap C(\Gamma):
	\sum\limits_{e \in \mathcal E : v \in e}\partial_{\nu}^{e}f(v)=0, v \in \mathcal{V}\},
	\end{split}
	\end{align}
	where $\hatt H^2(\Gamma):=\bigoplus_{e\in\cE}H^2(e)$ is the $L^2$ based Sobolev space, cf. \cite{BK}.
	
	The main goal of this paper is to prove that the spectrum of $\bbH$ is a zero Lebesgue measure set whenever the sequence of sphere numbers satisfies certain dynamical conditions. Due to the large number of symmetries of $\Gamma$, the operator $\bbH$ can be written, see \cite[Theorem 3.5]{KN}, as the   direct sum of self-adjoint realizations of the following Sturm-Liouville differential expression
	\begin{align}
	\begin{split}
	&\tau = -\frac{1}{\mu(x)}\left(\frac{d}{dx}\mu(x)\frac{d}{dx}\right),\lb{n1.1n}\\
	&\mu(x):=\sum_{n=0}^{\infty}{s}_n {s}_{n+1}\chi_{[n, {n+1})}(x), x\in \bbR_+.
	\end{split}
	\end{align}
	The self-adjoint realizations $\tau$ in question are of two spectrally relevant types. The operators of the first type have compact resolvent and, hence, only discrete spectrum. They do enter the direct sum decomposition of $\bbH$ infinitely many times. However, there is only a finite collection of mutually non-unitarily equivalent operators of the first type.  Therefore, their total spectral contribution is worth only a zero Lebesgue measure set (as it is countable). Of course, some points of this set end up being eigenvalues of $\bbH$ of infinite multiplicity. Since they could be isolated from the rest of the spectrum of $\bbH$, we do not claim that $\sigma(\bbH)$ is a (generalized) Cantor set. The second type of self-adjoint realizations is given by $\tau$ acting in $L^2(0,\infty)$ subject to the Neumann condition at $0$.  Showing zero measure spectrum for this operator is the main technical issue addressed in this work.  To that end we utilize Kotani theory for $\tau$ and the base dynamical system $(\Omega, T)$ given by a subshift over the admissible values of the sphere numbers. Assuming that this subshift satisfies Boshernitzan's condition we obtain that the spectrum is given by the set $\cZ$ on which the Lyapunov exponent vanishes, see Section \ref{section4}.  This step, in particular, requires a Sch'nol-type result for $\tau$, see Lemma \ref{lemma2.2}, \cite{BMLS}. Then  we argue that $|\cZ|>0$ implies periodicity of the sequence of sphere numbers, contrary to the construction of $\Omega$. This gives that the spectrum of the half-line operator is a zero measure set, see Theorem~\ref{theorem4.5}, which in turn yields the main assertion.  A key to this argument is a new version of the celebrated Borg--Marchenko result, \cite{GL, GS, M, S}, establishing a one-to-one correspondence between the Weyl--Titchmarsh functions and the potentials. A rather general version of this result was obtained in \cite{EGNT1}; however, the operators of the form \eqref{n1.1n} are notably excluded from consideration, see \cite[Hypothesis 6.1]{EGNT1}. In this work, we prove a local version of Borg--Marchenko result that allows us to recover arbitrary finite blocks of sphere numbers from the asymptotic behavior of the $m$-functions for large non-real values of the spectral parameter, see Theorem~\ref{thm1.4}. It is worth mentioning that Breuer and Frank, \cite[Proposition 12]{BF}, obtained the Borg--Marchnko result for a somewhat relevant class of operators. Our method, which itself stems from Bennewitz's work \cite{Ben}, can easily be adapted to yield \cite[Proposition 12]{BF}.

	\section{Main Results}
	
Fix a finite set $\cJ\subset\bbN$. Let  $\cJ^{\bbZ}$ be equipped with the following metric
	 \begin{equation}\lb{3.21}
	 d(\omega, \wti \omega):=\sum_{n\in\bbZ}\frac{1-\delta_{\omega_n, \wti \omega_n}}{2^{|n|+1}},\ \omega, \wti \omega\in\cJ^{\bbZ}.
	 \end{equation}
	 Let $T: \cJ^{\bbZ}\rightarrow\cJ^{\bbZ}$ denote the left shift, i.e., $[T(\omega)]_n:=\omega_{n+1}$, $\omega\in\cJ^{\bbZ}$. A $T$-invariant, closed (with respect to $d$) subset $\Omega\subset \cJ^{\bbZ}$ is called a {\it subshift} over $\cJ$. This paper is concerned with a special class (which is yet very general\footnote{the scope of generality is comprehensively discussed in \cite{DL2}}) of subshifts satisfying Boshernitzan's condition $(B)$. Let us recall the relevant definitions. We say that $(\Omega, T)$ is {\it minimal} if every orbit $\{T^n\omega: \omega\in\Omega\}$ is dense. A minimal subshift is called {\it aperiodic} if one of its elements is not periodic, that is, $T^p \omega = \omega$ for some $p \in \Z$ implies $p = 0$. It is then easy to see that all elements of $\Omega$ are aperiodic. The set of words corresponding to $\Omega$ is defined by
	\[\cW:=\{\omega_k\cdots \omega_{k+n-1}:k\in\mathbb{Z},\ n\in\mathbb{N},\ \omega\in\Omega\}. \]
	Each word $w\in\cW$ of length $|w|\in\bbN$ determines the cylinder set
	\[V_{w}:=\{\omega\in\Omega: \omega_1\cdots \omega_{|w|}=w \}. \]
	For a $T$-invariant probability measure $\nu$ on $\Omega$, let us define the following quantity,
	\[\eta_{\nu}(n):=\min\{\nu(V_{w}): w\in \cW, |w|=n \}. \]
	A minimal subshift $(\Omega, T)$ is said to satisfy Boshernitzan's condition (B) if there is an ergodic probability measure $\nu$ on $\Omega$ such that
	\begin{equation}\lb{BC}
	\limsup\limits_{n\rightarrow\infty} n\,\eta_{\nu}(n)>0.
	\end{equation}

In the theorem below we denote $\Omega\upharpoonright_{\bbZ_+}:=\{ \{s_n\}_{n=0}^{\infty}: \{s_n\}_{n\in\bbZ}\in\Omega\}$.

	\begin{theorem}\lb{main}
		Suppose that $(\Omega, T)$ is a minimal aperiodic subshift over a finite set $\cJ\subset \bbN$, $|\cJ|\geq 2$ satisfying Boshernitzan's condition \eqref{BC}. Then for every $\{s_n\}_{n=0}^{\infty}\in\Omega\upharpoonright_{\bbZ_+}$, the spectrum of the Kirchhoff Laplacian $\bbH$ defined in \eqref{new1.1} is a zero Lebesgue measure set.
	\end{theorem}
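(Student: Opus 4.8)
The idea is to reduce $\bbH$ to a one-dimensional ergodic Sturm--Liouville operator and then run Kotani theory, using the local Borg--Marchenko result of Theorem \ref{thm1.4} to rule out positive-measure spectrum. First I would apply the separation-of-variables decomposition of \cite[Theorem 3.5]{KN}: $\bbH$ is unitarily equivalent to a direct sum of self-adjoint realizations of the weighted expression $\tau$ from \eqref{n1.1n}. The ``angular'' realizations have compact resolvent; although each appears with infinite multiplicity, only finitely many of them are mutually non-unitarily-equivalent, so the part of $\sigma(\bbH)$ they contribute is a countable, hence Lebesgue-null, set. Thus everything reduces to the single ``radial'' summand $H$: the expression $\tau$ in $L^2(0,\infty)$ with a Neumann condition at $0$, whose coefficient $\mu$ is the step function built from the sphere numbers $\{s_n\}$.

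Next I would set up the dynamical picture. Since $\mu$ is piecewise constant, the equation $\tau u=\lambda u$ reduces to $-u''=\lambda u$ on each interval $[n,n+1)$ with $u$ and $\mu u'$ matched at the integers, so (in suitable coordinates) the $0$-to-$N$ transfer matrix is an alternating product of the free unit-interval propagator and diagonal junction matrices $\diag(1,\,s_{n-1}/s_{n+1})$ --- a cocycle driven by a locally constant function over the subshift $(\Omega,T)$. Embedding $H$ into the ergodic family $\{H_\omega\}_{\omega\in\Omega}$, I would record the consequences of Boshernitzan's condition \eqref{BC}: unique ergodicity of $(\Omega,T)$, constancy of the spectrum $\Sigma$ along the family (from minimality), and uniform existence of the Lyapunov exponent $L(\lambda)$; cf.\ \cite{CL, DF}. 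The goal of Section \ref{section4} is then Theorem \ref{theorem4.5}, namely that $\sigma(H)=\cZ$ has zero Lebesgue measure, where $\cZ:=\{\lambda : L(\lambda)=0\}$. The identity $\Sigma=\cZ$ splits into $\cZ\subseteq\Sigma$, which follows from uniform subexponential boundedness of transfer matrices on $\cZ$ together with the Sch'nol-type criterion of Lemma \ref{lemma2.2}, and $\Sigma\subseteq\cZ$, which follows from uniform hyperbolicity of the cocycle wherever $L>0$ (under \eqref{BC}, positivity of $L$ at one energy propagates to a neighborhood).

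It then remains to show $|\cZ|=0$, which I would do by contradiction. If $|\cZ|>0$, then Kotani theory applies: $\cZ$ is, up to a null set, the essential support of the absolutely continuous spectrum, and the family is reflectionless there, so for a.e.\ $\lambda\in\cZ$ the half-line Weyl--Titchmarsh functions satisfy $m_+(\lambda+i0,\omega)=-\overline{m_-(\lambda+i0,\omega)}$. Since $m_-(\cdot,\omega)$ depends only on the left data $(\omega_n)_{n\le 0}$ and the sphere numbers take only finitely many values, this forces $m_+(\cdot,\omega)$ --- and hence, through its asymptotic expansion as $|\Im\lambda|\to\infty$, every finite initial block $s_1\cdots s_N$ --- to be determined by finitely many symbols lying to the left of $0$; the crucial input here is the local Borg--Marchenko uniqueness result of Theorem \ref{thm1.4}, the weighted expression $\tau$ being of a type excluded from earlier versions such as \cite[Hypothesis 6.1]{EGNT1}. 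Thus $\omega_1$ is a locally determined function of the past, and minimality together with unique ergodicity of $(\Omega,T)$ forces the sequence of sphere numbers to be periodic, contradicting aperiodicity of $\Omega$. Hence $|\cZ|=0$ and $|\sigma(H)|=0$; finally, for an arbitrary one-sided sequence in $\Omega\upharpoonright_{\bbZ_+}$ the associated half-line operator has the same spectrum as the rest of the family up to a countable set of eigenvalues (by minimality), so its Lebesgue measure is still zero, and combined with the first paragraph we get $|\sigma(\bbH)|=0$.

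\textbf{Main obstacle.} The delicate part is the last step: making precise that the reflectionless condition on $\cZ$ genuinely propagates left-half-line information into $m_+$, and then \emph{extracting} finite blocks of sphere numbers from the high-energy behavior of $m_+$. This is exactly what Theorem \ref{thm1.4} is for; proving it --- in the spirit of Bennewitz \cite{Ben} --- for the non-standard weighted Sturm--Liouville expression $\tau$, together with the requisite control of the $m$-function asymptotics, is the technical heart on which the whole argument rests.
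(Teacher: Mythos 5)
Your proposal is correct and follows essentially the same route as the paper: the Kostenko--Nicolussi decomposition reducing $\bbH$ to the half-line operator plus countably many discrete-spectrum summands, embedding into the ergodic full-line family, uniformity of the cocycle under Boshernitzan's condition giving $\sigma(H_\omega)=\cZ$ via the Sch'nol-type lemma, and the contradiction argument combining Kotani's reflectionless property with the local Borg--Marchenko theorem (packaged in the paper as the restriction maps of Theorem \ref{thm3.6}) to force periodicity. The only detail you gloss over, which the paper handles explicitly, is that uniform continuity of the left-to-right restriction map determines the sequence $\{s_n s_{n+1}\}$ (not $\{s_n\}$ directly), whence periodicity of $\{s_n\}$ follows with a possibly larger period --- a minor bookkeeping point, not a gap.
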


	\begin{proof} 
		The Kirchhoff Laplacian $\bbH$ is unitarily equivalent, cf.~\cite[Theorem 3.5]{KN}, to the following direct sum
		\begin{equation}\lb{dec}
		H^+\oplus\bigoplus_{n\geq 1} \oplus _{j=1}^{s_{n}-1}  H_n^1 \oplus\bigoplus_{n\geq 1}\oplus_{j=1}^{(s_{n}-1)(s_{n+1}-1)}H_n^2,
		\end{equation}	
		where $H^+, H^1_n, H^2_n$ are the self-adjoint realizations of \eqref{n1.1n} on $\bbR_+=(0,\infty)$, $J_n:=(n-1, n+1),$ and $I_n:=(n, n+1)$, correspondingly, defined next.
		The half-line operator is given by
		\begin{align}
		\begin{split}\lb{nn1.1n}
		&H^+:\dom(H^+)\subset L^2(\bbR_+; \mu)\rightarrow L^2(\bbR_+; \mu), H^+ u:= \tau u,\ u\in\dom(H^+),\\
		&\dom(H^+)=\{f\in L^2(\bbR_+, \mu): f, \mu f'\in AC(\bbR_+), \tau f\in L^2(\bbR_+; \mu), f'(0)=0\}.
		\end{split}
		\end{align}
		The operators $H_n^1, H_n^2$, defined on finite intervals, are given by
		\begin{align}\lb{9.1}
		&H^1_n:\dom(H^1_n)\subset L^2(J_n; \mu)\rightarrow L^2(J_n; \mu), H^1_n u:= \tau u,\ u\in\dom(H^1_n),\\
		&\dom(H^1_n)=\left\{f\in L^2(J_n, \mu)\Big| \begin{matrix}
		f, \mu f'\in AC(J_n),
		\tau f\in L^2(J_n; \mu),\\ f(n-1)=f(n+1)=0
		\end{matrix}\right\},
		\end{align}
		and
		\begin{align}\lb{9.2}
		&H_n^2:\dom(H_n^2)\subset L^2(I_n; \mu)\rightarrow L^2(I_n; \mu), H_n^2 u:= \tau u,\ u\in\dom(H_n^2),\\
		&\dom(H_n^2)=\left\{f\in L^2(I_n, \mu)\Big| \begin{matrix}
		f, \mu f'\in AC(I_n),
		\tau f\in L^2(I_n; \mu),\\ f(n)=f(n+1)=0
		\end{matrix}\right\}.
		\end{align}
Since $s_n$ takes finitely many values, the sequence of operators $\{H^1_n\}_{n\in\bbZ_+}$ contains only finitely many mutually non-unitarily equivalent operators, each of which has only discrete spectrum (in fact the eigenvalues are given by the roots of some trigonometric transcendental equation, see \cite{Ber}). The union $\Sigma_1$ of these spectra is a countable set. Next, every operator $H_n^2$ is unitarily equivalent to the Dirichlet Laplacian on $(0,1)$, its spectrum is discrete and denoted by $\Sigma_2$. Then employing \eqref{dec} we get
\begin{equation}
\sigma(\bbH) = \sigma(H^+) \cup \Sigma_1\cup\Sigma_2.
\end{equation}
Thus, to prove the assertion it is enough to show that $|\sigma(H^+)|=0$. To that end, we extend the sequence $\{s_n\}_{n\geq 0}$ to a two-sided sequence belonging to the subshift $\Omega$, and introduce the corresponding self-adjoint operator on $\bbR$  as follows
\begin{align}
\begin{split}
&H :\dom(H)\subset L^2(\bbR; \mu )\rightarrow L^2(\bbR; \mu ), H  u:= \tau  u,\ u\in\dom(H ),\\
&\dom(H )=\{f\in L^2(\bbR, \mu ): f, \mu  f'\in AC(\bbR), \tau f\in L^2(\bbR; \mu )\},
\end{split}
\end{align}
where the function $\mu$ is defined for the two-sided sequence as in \eqref{n1.1n}. Now, by \cite[Theorem 9.11]{T09}, we have $\sigma_{ess}(H^+)\subset \sigma_{ess}(H)$\footnote{in fact, ``$\subset$'' can be replaced by ``$=$" by standard arguments utilizing ergodicity/minimality of $(\Omega,T)$, cf., e.g., \cite{DF}, \cite{CL}}. By Theorem~\ref{theorem4.5}, the Lebesgue measure of $\sigma_{ess}(H)$ is equal to zero (in fact, the whole spectrum is a fixed zero measure generalized Cantor set for every choice of the sequence of the sphere numbers from $\Omega$). Thus, we have $|\sigma(H^+)|\leq|\sigma_{ess}(H^+)|+|\sigma_{disc}(H^+)|=0$. Hence, one has
\[|\sigma(\bbH)|\leq |\sigma(H^+)|+|\Sigma_1|+|\Sigma_2|=0,\] as asserted.
\end{proof}

As pointed out in \cite{DL1, DL2}, many subshifts of interest satisfy condition (B). In fact, one can construct examples of such subshifts rather explicitly. Let us provide two examples.

\begin{example}
Let $N\geq 2$ and let $\cJ=\{\gamma_k; 1\leq k\leq N\}\subset \bbN$ be a set containing at least two distinct elements. 
Fix $\alpha\in\bbR\setminus\bbQ$ and  a rational partition of the unit circle $0=a_0<a_1<...<a_N=1$, $a_k\in\bbQ$. Pick any $\omega\in[0,1)$ and define
\[s_n(\omega):= \sum_{k=1}^{N}\gamma_k \chi_{[a_{k-1},a_k)}(n\alpha+\omega), n\in\bbZ. \]
Then the Kirchhoff Laplacian $\bbH$ given by such a sequence of sphere numbers has zero measure spectrum. Indeed, in this case the subshift $\Omega:=\overline{\{s(\omega): \omega\in[0,1)\}}$ satisfies Boshernitzan's condition \eqref{BC}, see e.g. \cite[Theorem 10]{DL2}. 
\end{example}

\begin{example}\lb{ex2.3}
Assume that the sequence of the sphere numbers $s\in \{1,2\}^{\bbZ_+}$ is invariant under the action of the Fibonacci substitution $\cS(1)=12$, $\cS(2)=1$, e.g. $s=12112121...$, see Figure \ref{Fig1}.  Then the Kirchhoff Laplacian $\bbH$ given by such a sequence of sphere numbers has zero measure spectrum. Here, 
\[\Omega:=\big\{\omega\in\cJ^{\bbZ}:\text{\ every subword of $\omega$ is a subword of $s$}\big\}.\] It satisfies Boshernitzan's condition \eqref{BC} by \cite[Theorem 4]{DL2}.
\end{example}

The sequences in both examples generate minimal subshifts satisfying (B), see \cite{D1, D2, DL1, DL2} for details.

\begin{remark}
The assertion of Theorem~\ref{main} also holds in the discrete setting, i.e., when $\bbH$ is given by the combinatorial graph Laplacian.  Up to a countable set, the spectrum of the discrete Hamiltonian is given by the spectrum of a certain Jacobi matrix, see \cite{BKe}.  Of course, the Borg--Marchenko and Sch'nol-type results become trivial at this level.
\end{remark}

	\section{Elements of Weyl--Titchmarsh Theory}
	In this section we discuss Weyl--Titchmarsh theory for $\tau$, cf. \eqref{n1.1n}, in order to prepare the necessary ingredients for Kotani theory. Specifically we prove a local version of the Borg--Marchenko uniqueness result in Theorem~\ref{thm1.4}, cf. \cite{Ben, B,  EGNT1, GL, GS, M, S}, and utilize it to construct the restriction maps in Theorem~\ref{thm3.6}. In order to simplify the notation, cf. \eqref{n1.1n}, we set
	\begin{equation}\lb{1.1n}
	\mu(x):=\sum_{n=0}^{\infty}{s}_n \chi_{[n, {n+1})}(x), x\in \bbR_+,
	\end{equation}
	where $\{s_n\}_{n\in\bbZ_+}\subset \cJ$ and $\cJ\subset\bbN$ is a finite set of cardinality at least two. The differential expression $\tau$ is defined as in \eqref{n1.1n}. The Dirichlet and Neumann solutions of the differential equation
	\begin{equation}\lb{1.1}
	\tau u-zu=0, z\in\bbC,
	\end{equation}
	are denoted by $\varphi(z, \cdot), \theta(z, \cdot)$, correspondingly, i.e.,
	\begin{equation}
	\varphi(z,0)=\mu(0)\theta'(z,0)=0,\  \mu(0)\varphi'(z,0)=\theta(z,0)=1.
	\end{equation}
	\begin{remark}\lb{1.1nn}
The conditions $f, \mu f'\in AC_{loc}(\bbR_+)$ imply
\begin{equation}
s_{n-1}f'(n^-)=s_{n}f(n^+), n\in\bbN.
\end{equation}
	\end{remark}
	
	Throughout this paper $\sqrt{z}$ denotes the branch of the square root corresponding to $\Re\sqrt{\bfi}>0$ and $\arg(z)\in(-\pi, \pi]$. In particular, $\Re(\sqrt{-z})>0$ whenever $z\in\bbC\setminus [0, \infty)$, and $\Re(\sqrt{-z})=0$ if $z\in[0,\infty)$.

	\subsection{Borg--Marchenko Theorem For Sturm--Liouville Operators in Impedance Form} The following lemma concerns the asymptotic behavior of the Dirichlet and Neumann solutions of \eqref{1.1} for large non-real spectral parameter.

	\begin{lemma}\lb{lemma1.1} For every $x\in\bbR_+\setminus\bbN$ one has
		\begin{align}
		\begin{split}&\varphi(z,x)=\frac{c(\lfloor x\rfloor)e^{\sqrt{-z}x}}{2\sqrt{-z}}(1+o(1)),\lb{1.2}\\
		&\varphi'(z,x)=\frac{c(\lfloor x\rfloor)e^{\sqrt{-z}x}}{2}(1+o(1)),
		\end{split}
		\end{align}
		\begin{align}
		\begin{split}\lb{1.2nn}
		&\theta(z,x)=\frac{c(\lfloor x\rfloor)e^{\sqrt{-z}x}}{2}(1+o(1)),\\
		&\theta'(z,x)=\frac{c(\lfloor x\rfloor)\sqrt{-z}e^{\sqrt{-z}x}}{2}(1+o(1)),
		\end{split}
		\end{align}
		as $z\rightarrow\infty$, $z\in\bbC\setminus[0,\infty)$,
		where
		\begin{equation}\lb{1.2nnn}
		c(\lfloor x\rfloor):=\frac{1}{s_0}\prod_{k=1}^{\lfloor x\rfloor}\frac{s_k+s_{k-1}}{2s_k}
		\end{equation}	
		Furthermore, for each $j\in\bbN$ one has
		\begin{equation}\lb{1.3}
		\varphi(z,x^+)=\cO\left(\frac{1}{\sqrt{z}}\right),\, \varphi'(z,x^+)=\cO\left(1\right),
		\end{equation}
		uniformly for $x\in[0,j)$ as $z\rightarrow\infty$, $z\in[0,\infty)$.
	\end{lemma}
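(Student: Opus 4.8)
The plan is to reduce everything to the study of the transfer matrices of \eqref{1.1} across unit intervals, since on each interval $[n,n+1)$ the weight $\mu$ is the constant $s_n$, and there the equation $\tau u - zu = 0$ becomes simply $-u'' = zu$, i.e. $u'' + zu = 0$. On $[n,n+1)$ the general solution is therefore a linear combination of $e^{\sqrt{-z}\,x}$ and $e^{-\sqrt{-z}\,x}$ (using the branch fixed above, so that $\Re\sqrt{-z} > 0$ for $z \in \bbC\setminus[0,\infty)$, and the growing exponential dominates). The matching at an integer point $n$ is governed by Remark~\ref{1.1nn}: $u$ is continuous, but $\mu u'$ is continuous, which forces $s_{n-1}u'(n^-) = s_n u'(n^+)$, i.e. $u'(n^+) = \tfrac{s_{n-1}}{s_n}u'(n^-)$. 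Thus the solution propagates by a product of two kinds of matrices: the free propagator $M_{\mathrm{free}}(z)$ across a unit interval in the variable $(u, u')$, and the jump matrix $\mathrm{diag}(1, s_{n-1}/s_n)$ at each integer.

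First I would write $M_{\mathrm{free}}(z) = \begin{pmatrix} \cosh(\sqrt{-z}) & \sinh(\sqrt{-z})/\sqrt{-z} \\ \sqrt{-z}\sinh(\sqrt{-z}) & \cosh(\sqrt{-z}) \end{pmatrix}$, whose entries all behave like $\tfrac12 e^{\sqrt{-z}}$ up to the obvious powers of $\sqrt{-z}$ and multiplicative $(1+o(1))$ corrections as $z\to\infty$ nontangentially; here one uses $\cosh w = \tfrac12 e^w(1+o(1))$, $\sinh w = \tfrac12 e^w(1+o(1))$ as $\Re w \to +\infty$. Next I would iterate: starting from the initial data $\varphi(z,0) = 0$, $\mu(0)\varphi'(z,0) = s_0 \varphi'(z,0) = 1$ (so $\varphi'(z,0^+) = 1/s_0$) for the Dirichlet solution, and $\theta(z,0) = 1$, $\theta'(z,0) = 0$ for the Neumann solution, one propagates across $\lfloor x\rfloor$ full unit intervals (picking up $\lfloor x\rfloor$ free propagators and the jump factors $s_k/s_{k-1}$ in the derivative slot, or rather the reciprocals depending on orientation) and then a final partial interval of length $x - \lfloor x\rfloor$. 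Collecting the leading exponential $e^{\sqrt{-z}x}$ and the accumulated constant, one reads off precisely the prefactor $c(\lfloor x\rfloor) = \tfrac{1}{s_0}\prod_{k=1}^{\lfloor x\rfloor}\tfrac{s_k+s_{k-1}}{2 s_k}$: each full interval contributes a factor $\tfrac12$ from the $\cosh/\sinh$ asymptotics combined with the jump $\tfrac{s_{k-1}+s_k}{2s_k}$-type bookkeeping that arises when one re-expresses the propagated pair back in the $(u,\mu u')$ or $(u,u')$ normalization used at each new integer. Careful tracking of which component carries an extra $\sqrt{-z}$ or $1/\sqrt{-z}$ then gives the four formulas \eqref{1.2}, \eqref{1.2nn}, and the $(1+o(1))$ errors are uniform because there are only finitely many possible values of the ratios $s_k/s_{k-1}$ (as $\cJ$ is finite) and only $\lfloor x\rfloor$ of them, a fixed finite number for $x$ in a bounded set.

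For the last assertion \eqref{1.3}, the regime is $z \in [0,\infty)$, $z\to\infty$, so now $\sqrt{-z}$ is purely imaginary and the solutions are oscillatory rather than exponentially growing; the free propagator entries are $\cos(\sqrt{z})$, $\sin(\sqrt{z})/\sqrt{z}$, $-\sqrt{z}\sin(\sqrt{z})$, $\cos(\sqrt{z})$, each bounded by a constant times the appropriate power of $\sqrt{z}$ \emph{uniformly} in $z$. Iterating across at most $j$ intervals (for $x \in [0,j)$) and using again that there are finitely many jump ratios, one gets that $\varphi(z,x^+)$ is a bounded combination of $1/\sqrt{z}$ and smaller terms, hence $\cO(1/\sqrt{z})$, while $\varphi'(z,x^+)$ picks up one power of $\sqrt{z}$ from the worst entry, so is $\cO(1)$; the bound is uniform in $x\in[0,j)$ since the number of factors is at most $j$. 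The main obstacle I anticipate is purely bookkeeping: keeping the normalization consistent at each integer (whether one propagates $(u,u')$ or $(u,\mu u')$) so that the telescoping product collapses to exactly \eqref{1.2nnn} rather than to something off by powers of the $s_k$; once the correct normalization is pinned down, the exponential asymptotics of $\cosh$ and $\sinh$ do the rest, and the finiteness of $\cJ$ makes all error estimates uniform essentially for free.
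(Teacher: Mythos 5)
Your proposal follows essentially the same route as the paper's proof: on each unit interval the equation is free, so one propagates $(\varphi,\varphi')$ and $(\theta,\theta')$ by the $\cosh/\sinh$ transfer matrices combined with the jump matrices $\operatorname{diag}(1,s_{k-1}/s_k)$ coming from the continuity of $\mu u'$, extracts the dominant exponential to accumulate the factor $\tfrac{s_{k-1}+s_k}{2s_k}$ per interval, and handles \eqref{1.3} by the same iteration with the oscillatory (uniformly bounded) propagator. The paper merely organizes this as an induction on $\lfloor x\rfloor$ (respectively on $j$) rather than as a single matrix product, which is a cosmetic difference.
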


	\begin{proof}[Proof of \eqref{1.2},  \eqref{1.2nn}]
To simplify the notation we denote
		$\alpha_k:=\frac{s_{k-1}}{s_k}$,
		and declare that all asymptotic formulas in this proof are for $z\rightarrow\infty$, $z\in\bbC\setminus[0,\infty)$.  Without loss we may assume that $s_0=1$.
		
		We will prove \eqref{1.2} by induction in $\lfloor x\rfloor$. If $\lfloor x\rfloor=0$, then $\tau u-zu=0$ is given by the free equation  $-u''(x)-zu(x)=0$, $x\in(0,1)$ and
		\begin{align}
		&\varphi(z,x^+)=\frac{\sinh(\sqrt{-z}x)}{\sqrt{-z}}=\frac{e^{\sqrt{-z}x}}{2\sqrt{-z}}(1+o(1)),\\
		&\varphi'(z,x^+)=\cosh\sqrt{-z}x=\frac{e^{\sqrt{-z}x}}{2}(1+o(1)),
		\end{align}
		as asserted.
		
		Suppose that both equations in \eqref{1.2}  hold for $\lfloor x\rfloor=k$, $k\in \bbZ_+$. Our first objective is to derive asymptotic formulas for $\varphi(z, (k+1)^+)$ and $\varphi'(z, (k+1)^+)$. Since
		\begin{equation}\lb{n1.4}
		\left(\cosh\sqrt{-z}x; \frac{\sinh\sqrt{-z}x}{\sqrt{-z}}\right)
		\end{equation}
		is a fundamental system for the differential equation \eqref{1.1} considered on $(k, k+1)$, and, by Remark \ref{1.1nn},
		\begin{equation}\lb{nn1.4}
\varphi(z,(k+1)^+)=\varphi(z, (k+1)^-),\ \varphi'(z,(k+1)^+)=\alpha_{k+1}\varphi'(z, (k+1)^-),
		\end{equation}
		the vector $(\varphi, \varphi')^{\top}(z,(k+1)^+)$ may be recovered using the fundamental matrix corresponding to \eqref{n1.4} and the matrix of vertex conditions corresponding to \eqref{nn1.4} as follows
		\begin{align}
		&\begin{bmatrix}
		\varphi(z, (k+1)^+)\\
		\varphi'(z, (k+1)^+)
		\end{bmatrix}
		=
		\begin{bmatrix}
		\cosh\frac{\sqrt{-z}}{2}& \frac{\sinh\frac{\sqrt{-z}}{2}}{\sqrt{-z}}\\
		{\alpha_{k+1}} {\sqrt{-z}\sinh\frac{\sqrt{-z}}{2}}&{\alpha_{k+1}}\cosh\frac{\sqrt{-z}}{2}
		\end{bmatrix}
		\begin{bmatrix}
		\varphi(z, k+1/2)\\
		\varphi'(z, k+1/2)
		\end{bmatrix}.
		\end{align}
		Combining this and the induction hypothesis we get
		\begin{align}
		\begin{split}\lb{1.4}
		&\varphi(z, (k+1)^+)=\cosh\left(\frac{\sqrt{-z}}{2}\right)\varphi\left(z, k+\frac12\right)+ \frac{\sinh\frac{\sqrt{-z}}{2}}{\sqrt{-z}}\varphi'\left(z, k+\frac12\right)\\
		 &\quad=\left(\frac{e^{\frac{\sqrt{-z}}2}}{2}\frac{e^{\sqrt{-z}(k+1/2)}}{2\sqrt{-z}}+\frac{e^{\frac{\sqrt{-z}}2}}{2\sqrt{-z}}\frac{e^{\sqrt{-z}(k+1/2)}}{2}\right)c_k(1+o(1))\\
		&\quad= \frac{c_ke^{\sqrt{-z}(k+1)}}{2\sqrt{-z}}(1+o(1)).
		\end{split}
		\end{align}
		Likewise,
		\begin{align}
		\begin{split}\lb{1.5}
		&\varphi'(z, (k+1)^+)={\alpha_{k+1}}{\sqrt{-z}}\,\sinh\left(\frac{\sqrt{-z}}{2}\right)\varphi\left(z, k+\frac12\right)\\
		&\hspace{4cm}+{\alpha_{k+1}}\cosh\left(\frac{\sqrt{-z}}{2}\right)\varphi'\left(z, k+\frac12\right)\\
		&\quad={\alpha_{k+1}} c_k\left(\frac{\sqrt{-z}e^{\frac{\sqrt{-z}}2}}{2}\frac{e^{\sqrt{-z}(k+1/2)}}{2\sqrt{-z}}+\frac{e^{\frac{\sqrt{-z}}2}}{2}\frac{e^{\sqrt{-z}(k+1/2)}}{2}\right)(1+o(1))\\
		&\quad= \frac{{\alpha_{k+1}}c_ke^{\sqrt{-z}(k+1)}}{2}(1+o(1)).
		\end{split}
		\end{align}
		Using the fundamental matrix corresponding to \eqref{n1.4} as before we get
		\begin{align}
		&\begin{bmatrix}
		\varphi(z, x)\\
		\varphi'(z, x)
		\end{bmatrix}
		=
		\begin{bmatrix}
		\cosh{\sqrt{-z}(x-k)}& \frac{\sinh\sqrt{-z}(x-k)}{\sqrt{-z}}\\
		{\sqrt{-z}\sinh\sqrt{-z}(x-k)}& \cosh\sqrt{-z}(x-k)
		\end{bmatrix}
		\begin{bmatrix}
		\varphi(z, k^+)\\
		\varphi'(z, k^+)
		\end{bmatrix},
		\end{align}
		for every $x\in(k+1,k+2)$. Then combining this and the asymptotic formulas \eqref{1.4}, \eqref{1.5} we arrive at
		\begin{align}
		\begin{split}
		&\varphi(z, x)=\cosh(\sqrt{-z}(x-k))\varphi\left(z, k^+\right)+ \frac{\sinh{\sqrt{-z}(x-k)}}{\sqrt{-z}}\varphi'\left(z, k^+\right)\\
		 &\quad=\left(\frac{e^{\sqrt{-z}(x-k)}}{2}\frac{e^{\sqrt{-z}k}}{2\sqrt{-z}}+\frac{e^{\sqrt{-z}(x-k)}}{2\sqrt{-z}}\frac{{\alpha_{k+1}}e^{\sqrt{-z}k}}{2}\right)c_k(1+o(1))\\
		&\quad= \frac{c_ke^{\sqrt{-z}x}}{2\sqrt{-z}}\left(\frac{1+{\alpha_{k+1}}}{2}\right)(1+o(1))=\frac{c_{k+1}e^{\sqrt{-z}x}}{2\sqrt{-z}}(1+o(1)),
		\end{split}
		\end{align}
		and
		\begin{align}
		\begin{split}
		&\varphi'(z, x)= {\sqrt{-z}}\,\sinh(\sqrt{-z}(x-k))\varphi(z, k^+)+ \cosh(\sqrt{-z}(x-k))\varphi'\left(z, k^+\right)\\
		&\quad=  c_k\left(\frac{\sqrt{-z}e^{\sqrt{-z}(x-k)}}{2}\frac{e^{\sqrt{-z}k}}{2\sqrt{-z}}+\frac{e^{\sqrt{-z}(x-k)}}{2}\frac{{\alpha_{k+1}}e^{\sqrt{-z}k}}{2}\right)(1+o(1))\\
		&\quad= \frac{c_ke^{\sqrt{-z}x}}{2}\left(\frac{1+{\alpha_{k+1}}}{2}\right)(1+o(1))=\frac{c_{k+1}e^{\sqrt{-z}x}}{2}(1+o(1)).
		\end{split}
		\end{align}
		Therefore \eqref{1.2} holds as asserted. The asymptotic formulas in \eqref{1.2nn} can be shown similarly.
	\end{proof}

	\begin{proof}[Proof of \eqref{1.3}]  All asymptotic formulas in this proof are for  $z\rightarrow\infty$, $z\in[0,\infty)$. We will prove  \eqref{1.3} by induction in $j$.  If $j=1$, then
		\begin{align}
		&\varphi(z,x^+)=\frac{\sin(\sqrt{z}x)}{\sqrt{z}},\varphi'(z,x^+)=\cos\sqrt{z}x,
		\end{align}
		and thus \eqref{1.3} holds.
		
		Assuming that \eqref{1.3} holds for some $j\in\bbN$, we will prove that it also holds for $j+1$. Since $\varphi$ satisfies the vertex conditions, cf. \eqref{nn1.4}, and since $\cos(\sqrt{z}x), \frac{\sin(\sqrt{z}x)}{\sqrt{z}}$ is a fundamental system for the differential equation \eqref{1.1} considered on $(j-1,j)$, we get
		\begin{align}
		&\begin{bmatrix}
		\varphi(z, j^+)\\
		\varphi'(z, j^+)
		\end{bmatrix}
		=
		\begin{bmatrix}
		\cos\frac{\sqrt{z}}{2}& \frac{\sin\frac{\sqrt{z}}{2}}{\sqrt{z}}\\
		{-{\alpha_{j}}\sqrt{z}\sin\frac{\sqrt{z}}{2}}&{\alpha_{j}}\cos\frac{\sqrt{z}}{2}
		\end{bmatrix}
		\begin{bmatrix}
		\varphi(z, j-1/2)\\
		\varphi'(z, j-1/2)
		\end{bmatrix}.
		\end{align}
		Using this and the induction hypothesis we obtain
		\begin{align}
		\begin{split}
		&\varphi(z, j^+)=\cos\left(\frac{\sqrt{z}}{2}\right)\varphi\left(z, j-\frac12\right)+ \frac{\sin\frac{\sqrt{z}}{2}}{\sqrt{z}}\varphi'\left(z, j-\frac12\right)\\
		&\quad=\cos\frac{\sqrt{z}}{2}\cO\left(\frac{1}{\sqrt{z}}\right)+ \frac{\sin\frac{\sqrt{z}}{2}}{\sqrt{z}}\cO\left(1\right)=\cO\left(\frac{1}{\sqrt{z}}\right)
		\end{split}
		\end{align}
		and
		\begin{align}
		\begin{split}
		&\varphi'(z, j^+)=-\alpha_j{\sqrt{z}}\,{\sin\left(\frac{\sqrt{z}}{2}\right)}\varphi\left(z, j-\frac12\right)+\alpha_j\cos\left(\frac{\sqrt{z}}{2}\right)\varphi'\left(z, j-\frac12\right)\\
		 &\quad=-\alpha_j{\sqrt{z}}\,{\sin\left(\frac{\sqrt{z}}{2}\right)}\cO\left(\frac{1}{\sqrt{z}}\right)+\alpha_j\cos\left(\frac{\sqrt{z}}{2}\right)\cO\left(1\right)=\cO\left(1\right).
		\end{split}
		\end{align}
		Combining these asymptotic formulas and
		\begin{align}
		&\begin{bmatrix}
		\varphi(z, x)\\
		\varphi'(z, x)
		\end{bmatrix}
		=
		\begin{bmatrix}
		\cos\sqrt{z}(x-j)& \frac{\sin\sqrt{z}(x-j)}{\sqrt{z}}\\
		{- \sqrt{z}\sin\sqrt{z}(x-j)}& \cos\sqrt{z}(x-j)
		\end{bmatrix}
		\begin{bmatrix}
		\varphi(z, j^+)\\
		\varphi'(z, j^+)
		\end{bmatrix},
		\end{align}
		$x\in (j, j+1)$ one infers \eqref{1.3} with $j$ replaced by $j+1$ as required.
	\end{proof}
	
	\begin{remark}
		Choosing $s_k\equiv 1$ leads to the free Laplacian in which case \eqref{1.2}, \eqref{1.2nn} are consistent with the explicit solutions to the unperturbed problem.
	\end{remark}
	
	Let us recall the definition of the Weyl--Titchmarsh function $m$ corresponding to $\tau$.  First, for $z\in\bbC\setminus\bbR$ there exists a unique (up to a scalar multiple) square integrable solution $\psi(z,\cdot)\in L^2(\bbR_+; \mu)$ of \eqref{1.1}. Since $z$ is non-real, we have $\varphi(z,\cdot)\not\in L^2(\bbR_+; \mu)$ (otherwise $z$ would be an eigenvalue of the Dirichlet realization of $\tau$). Thus there exists a non-zero $m(z)\in\bbC$ such that
	\begin{equation}\lb{1.5n}
	\psi(z,x)= \theta(z,x)+m(z)\varphi(z,x).
	\end{equation}
	The Weyl--Titchmarsh function $m$ is the mapping $z\mapsto m(z)$. It is analytic in $\bbC_+:=\{z\in\bbC: \Im z>0\}$ and, in fact, it is a Herglotz function, i.e., $\Im m(z)>0$, $z\in\bbC_+$.

	\begin{lemma}
		For every $x\in\bbR_+$, $\alpha\in(-\pi,\pi)\setminus\{0\}$ one has
		\begin{equation}\lb{1.7}
		\varphi(z,x)\psi(z,x)\rightarrow 0,
		\end{equation}
		as $z\rightarrow\infty$ along the ray $\arg(z)=\alpha$.
	\end{lemma}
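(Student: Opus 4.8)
The plan is to combine the large-$z$ asymptotics of the Dirichlet solution $\varphi$ from Lemma~\ref{lemma1.1} with a matching bound on the Weyl solution $\psi$, the point being that $\psi$ decays like $e^{-\sqrt{-z}\,x}$ while $\varphi$ grows like $e^{\sqrt{-z}\,x}$, so that the exponentials cancel and the product is controlled by the prefactors, which carry negative powers of $\sqrt{-z}$. First I would fix $x\in\bbR_+$ and, since the $x\in\bbN$ case follows from the one-sided limits $x^{\pm}$, assume $x\notin\bbN$. Along the ray $\arg(z)=\alpha$ with $\alpha\in(-\pi,\pi)\setminus\{0\}$ we have $\Re\sqrt{-z}>0$ and $|\sqrt{-z}|\to\infty$, so \eqref{1.2} gives $\varphi(z,x)=\tfrac{c(\lfloor x\rfloor)}{2\sqrt{-z}}e^{\sqrt{-z}\,x}(1+o(1))$.

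The main work is to show $\psi(z,x)=O\!\big(e^{-\sqrt{-z}\,x}\big)$, or more precisely that $\psi(z,x)\,e^{\sqrt{-z}\,x}$ is bounded (indeed $o(\sqrt{-z})$ would already suffice). The cleanest route is to exploit the $\tau$-independent representation of $\psi$ in terms of the $m$-function, $\psi(z,\cdot)=\theta(z,\cdot)+m(z)\varphi(z,\cdot)$, together with the known large-$z$ asymptotics of the Weyl--Titchmarsh function. One expects $m(z)=-\sqrt{-z}\,c_0(1+o(1))$ for a suitable constant $c_0$ (this is the standard Herglotz asymptotic for impedance-form operators, and on the first interval $\mu\equiv s_0$ it reduces to the free $m$-function $m(z)\sim -\sqrt{-z}$); one can derive it by applying \eqref{1.2} and \eqref{1.2nn} together with the square-integrability of $\psi$, which forces the coefficient of the growing exponential in $\theta+m\varphi$ to vanish to leading order. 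Feeding this back into \eqref{1.5n} and using \eqref{1.2}, \eqref{1.2nn} once more, the $e^{\sqrt{-z}\,x}$-terms in $\theta(z,x)$ and $m(z)\varphi(z,x)$ cancel to leading order, leaving $\psi(z,x)=O(e^{-\sqrt{-z}\,x})$ uniformly on the ray; alternatively, a Gronwall/variation-of-parameters argument on each interval $(k,k+1)$, propagated by the vertex transfer matrices in \eqref{nn1.4}, yields the same decay directly without invoking $m$.

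Granting both bounds, one multiplies: $\varphi(z,x)\psi(z,x)=O\!\big(|\sqrt{-z}|^{-1}\,e^{\sqrt{-z}x}\cdot e^{-\sqrt{-z}x}\big)=O\!\big(|\sqrt{-z}|^{-1}\big)\to0$ as $z\to\infty$ along $\arg z=\alpha$, which is the claim. The main obstacle I anticipate is pinning down the decay of $\psi$ with a genuinely uniform-in-$z$ constant along the ray: the asymptotics in Lemma~\ref{lemma1.1} are stated pointwise in $x$ with $o(1)$-errors as $z\to\infty$, so to cancel the exponentials reliably one needs the Weyl solution's asymptotics with enough uniformity that the $o(1)$ errors do not conspire against the cancellation. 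I would handle this either by establishing the $m(z)\sim-c_0\sqrt{-z}$ asymptotic carefully (tracking that the subleading term in $\theta+m\varphi$ is genuinely $O(e^{-\sqrt{-z}x})$ and not merely $o(e^{\sqrt{-z}x})$), or, more robustly, by the direct transfer-matrix estimate: on $(0,1)$ the decaying solution is explicit, $e^{-\sqrt{-z}x}$, and each step across a vertex and across a unit interval multiplies by a matrix whose entries are $O(e^{|\sqrt{-z}|/2})$ acting on a vector of size $O(e^{-\sqrt{-z}k})$, which after $\lfloor x\rfloor$ steps gives $\psi(z,x)=O(e^{-\sqrt{-z}x})$ with the implied constant depending only on $x$ and $\cJ$. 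Either way the estimate \eqref{1.7} follows.
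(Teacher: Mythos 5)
There is a genuine gap, and it sits exactly where you flagged it: the bound $\psi(z,x)=O\bigl(e^{-\sqrt{-z}\,x}\bigr)$ (or even $o\bigl(\sqrt{-z}\,e^{-\sqrt{-z}\,x}\bigr)$, which is what you actually need after multiplying by $\varphi$). Neither of your two routes delivers it. For the cancellation route: the asymptotics \eqref{1.2}, \eqref{1.2nn} carry multiplicative errors that are only known to be $1+o(1)$, so even granting $m(z)=-c_0\sqrt{-z}(1+o(1))$, the combination $\theta+m\varphi$ is only controlled to $o\bigl(e^{\Re\sqrt{-z}\,x}\bigr)$; to conclude it is $O\bigl(e^{-\Re\sqrt{-z}\,x}\bigr)$ you would need the leading terms to cancel to \emph{relative} accuracy $e^{-2\Re\sqrt{-z}\,x}$, which is exponentially finer than anything Lemma~\ref{lemma1.1} provides. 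For the transfer-matrix route: propagating the explicit decaying free solution forward from $(0,1)$ does not produce the Weyl solution, since square-integrability at $+\infty$ is a condition at infinity; at each vertex the propagated solution generically acquires a component along the growing mode, and in any case ``a matrix of norm $e^{\Re\sqrt{-z}}$ applied to a vector of size $e^{-\Re\sqrt{-z}\,k}$'' only yields $O(1)$ after $k$ steps, not $O\bigl(e^{-\Re\sqrt{-z}\,x}\bigr)$. What would actually be required is a stable-direction (cone/contraction) argument for the hyperbolic cocycle built from the interval propagators and the vertex matrices $\mathrm{diag}(1,\alpha_{k+1})$, with uniformity in $z$ along the ray --- true in all likelihood, but a substantial piece of work that your sketch does not supply.

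The paper sidesteps $\psi$ entirely. It identifies $\varphi(z,x)\psi(z,x)$ with the diagonal Green function $G(z,x,x)$ of $H^+$ and uses the spectral representation
\begin{equation}
G(z,x,x)=\int_{\bbR}\frac{\varphi^2(\lambda,x)}{z-\lambda}\,d\nu(\lambda),
\end{equation}
where $\nu$ is the measure in the Herglotz representation of $m$. The real-axis bound \eqref{1.3} gives $\varphi^2(\lambda,x)=O(1/\lambda)$, the Herglotz representation gives $\int(1+\lambda^2)^{-1}d\nu(\lambda)<\infty$, and the elementary inequality $|\lambda-z|^{-1}\leq(1+|\lambda|)^{-1}(1+|z|)/\Im(z)$ (with $(1+|z|)/\Im(z)$ bounded along any non-real ray) lets dominated convergence finish the proof. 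This is why Lemma~\ref{lemma1.1} proves the seemingly unrelated estimate \eqref{1.3} for \emph{real} $z$: it is the input to this Green-function argument, not to a bound on the Weyl solution. I would recommend abandoning the $\psi$-decay strategy in favor of this one, or else committing to a full cone argument for the cocycle.
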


	\begin{proof}
		Our strategy is to view the left-hand side of \eqref{1.7} as the diagonal Green function. In order to show that the latter vanishes asymptotically for high energies we will employ \eqref{1.3} and the following representation of $m$, see e.g. \cite[Chapter 9]{T09},
		\begin{align}
		&m(z)=d+\int_{\bbR}\left(\frac{1}{\lambda-z}-\frac{\lambda}{1+\lambda^2}\right) d\nu(\lambda), \\
		&d= \Re(m(\bfi)),\ \int_{\bbR}\frac{1}{1+\lambda^2}d\nu(\lambda)=\Im(m(\bfi))<\infty.\lb{1.8}
		\end{align}
		The Green function of $H^+$, cf. \eqref{nn1.1n}, that is, the integral kernel of $(H^+-z)^{-1}$, admits two representations,
		\begin{equation}\lb{1.9}
		G(z,x,x)=\varphi(z,x)\psi(z,x), x\in\bbR_+,
		\end{equation}
		and
		\begin{equation}
		G(z,x,x)=\int_{\bbR}\frac{\varphi^2(\lambda, x)}{z-\lambda}d\nu(\lambda), x\in\bbR_+.
		\end{equation}
		Then utilizing \eqref{1.3}, the second equation in \eqref{1.8}, the inequality (cf. \cite[(3.114)]{T09})
		\begin{equation}
		\left|\frac{1}{\lambda-z}\right|\leq \frac{1}{1+|\lambda|}\frac{1+|z|}{\Im(z)},
		\end{equation}
		and the dominated convergence theorem, we get $G(z, x,x)\rightarrow0$ as $z\rightarrow\infty$, $\arg(z)=\alpha$. Hence by \eqref{1.9} we get \eqref{1.7}.
	\end{proof}

	The proof of the following theorem is motivated by \cite{Ben}, where the Borg--Marchenko Theorem is established for Schr\"odinger operators $-\frac{d^2}{dx^2}+V(x)$. The main ingredients of Bennewitz's proof are certain asymptotic formulas for the Dirichlet and Neumann solutions $\varphi(z,x), \theta(z,x)$ and for the Green function $G(z,x,x)$. In our setting these are given by \eqref{1.2}, \eqref{1.2nn} and \eqref{1.7}, respectively.

	\begin{theorem}\lb{thm1.4} Let $s, \wti s\in\cJ^{\bbZ_+}$ and let $m, \wti m$ be the corresponding Weyl--Titchmarsh functions. Then
		\begin{equation}\lb{1.9n}
		s_j=\wti s_j,\ 0\leq j\leq k,
		\end{equation}
		if and only if for every $\alpha\in (-\pi,\pi)\setminus\{0\}$, one has
		\begin{equation}\lb{1.9nn}
		|m(z)-\wti m(z)|=o\left(\exp\{-2\left(k+1/2\right)\Re\sqrt{-z}\}\right),
		\end{equation}
		as $z\rightarrow\infty$ along the ray $\arg(z)=\alpha$.
		\end{theorem}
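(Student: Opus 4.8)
The plan is to prove both implications by exploiting the exponential asymptotics from Lemma~\ref{lemma1.1} together with the vanishing of the diagonal Green function from \eqref{1.7}. The key algebraic identity is the standard one: if $\psi=\theta+m\varphi$ and $\wti\psi=\wti\theta+\wti m\,\wti\varphi$ are the Weyl solutions for the two problems, then on the interval $(k,k+1)$ (where both coefficient functions agree under the hypothesis \eqref{1.9n}) one can express the difference $m(z)-\wti m(z)$ in terms of Wronskian-type combinations of $\varphi,\theta,\psi,\wti\psi$ evaluated at the point $x=k+1/2$. More precisely, since $\varphi(z,\cdot)$ and $\theta(z,\cdot)$ form a fundamental system of $\tau u=zu$ on $(0,k+1)$ with constant Wronskian $\mu(0)(\varphi\theta'-\varphi'\theta)\equiv -1$, one writes $\wti\psi(z,x)=a(z)\varphi(z,x)+b(z)\theta(z,x)$ on $(k,k+1)$ with coefficients computed from the boundary data at $k+1/2$; matching against $\psi=\theta+m\varphi$ and using that $\varphi=\wti\varphi$, $\theta=\wti\theta$ on $[0,k+1]$ yields an exact formula of the shape
\[
m(z)-\wti m(z)=\frac{W_{k+1/2}(\psi,\wti\psi)}{\varphi(z,k+1/2)\,\wti\varphi(z,k+1/2)}\cdot(\text{bounded factor}),
\]
where $W_x(f,g):=\mu(x)\big(f(x)g'(x)-f'(x)g(x)\big)$. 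This is the analogue of Bennewitz's identity in \cite{Ben}.

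For the forward implication (\eqref{1.9n} $\Rightarrow$ \eqref{1.9nn}): under \eqref{1.9n} the functions $\varphi,\theta$ coincide with $\wti\varphi,\wti\theta$ on $[0,k+1]$, so the numerator $W_{k+1/2}(\psi,\wti\psi)$ only involves the Weyl solutions. I would estimate it by writing $\psi=\theta+m\varphi$, $\wti\psi=\theta+\wti m\,\varphi$ (same $\varphi,\theta$), so that $W_{k+1/2}(\psi,\wti\psi)=(\wti m-m)\,W_{k+1/2}(\theta,\varphi)=(\wti m - m)\cdot(\text{const})$ --- wait, that is circular; instead the correct route is to observe that $\varphi\psi\to 0$ and $\varphi\wti\psi\to0$ by \eqref{1.7}, and to isolate $m-\wti m$ by dividing the relation $\psi-\wti\psi=(m-\wti m)\varphi$ (valid on $[0,k+1]$ since $\theta=\wti\theta$ there) by $\varphi$: thus $m(z)-\wti m(z)=\big(\psi(z,x)-\wti\psi(z,x)\big)/\varphi(z,x)$ for any $x\in(k,k+1)$, and in particular at $x=k+1/2$. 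Now $|\varphi(z,k+1/2)|\sim \tfrac{c(k)}{2|\sqrt{-z}|}e^{(k+1/2)\Re\sqrt{-z}}$ by \eqref{1.2}, while $|\psi(z,k+1/2)|=|G(z,k+1/2,k+1/2)|/|\varphi(z,k+1/2)|=o(1)/|\varphi(z,k+1/2)|$ by \eqref{1.7} and \eqref{1.9}, so $|\psi(z,k+1/2)|=o\big(|\sqrt{-z}|e^{-(k+1/2)\Re\sqrt{-z}}\big)$, and likewise for $\wti\psi$; dividing by $|\varphi(z,k+1/2)|$ once more produces the factor $e^{-2(k+1/2)\Re\sqrt{-z}}$, and the polynomial factors in $\sqrt{-z}$ are absorbed into the $o(\cdot)$. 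This gives \eqref{1.9nn}.

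For the converse (\eqref{1.9nn} $\Rightarrow$ \eqref{1.9n}): I argue by contradiction. Suppose \eqref{1.9n} fails and let $k_0$ be the smallest index with $s_{k_0}\neq\wti s_{k_0}$; note $k_0\le k$. On $[0,k_0]$ the two coefficient functions agree, hence $\varphi=\wti\varphi$, $\theta=\wti\theta$ on $[0,k_0]$, and in particular at $x=k_0$ the values and derivatives match. Propagating across the jump at $x=k_0$ with the transfer matrices from Lemma~\ref{lemma1.1} and the vertex relation in Remark~\ref{1.1nn} (whose matrix now differs in the parameter $\alpha_{k_0}=s_{k_0-1}/s_{k_0}$ versus $\wti\alpha_{k_0}=\wti s_{k_0-1}/\wti s_{k_0}$, and these are unequal since $s_{k_0-1}=\wti s_{k_0-1}$ but $s_{k_0}\ne\wti s_{k_0}$), one finds that $\varphi(z,x)\wti\varphi(z,x)$ grows like $(\text{const}\neq0)\,e^{2x\Re\sqrt{-z}}$ near $x=k_0+1/2$ with a \emph{different} constant than the no-jump case --- but the relevant quantity is the leading coefficient of $m-\wti m$. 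Concretely, from the exact Bennewitz identity $m(z)-\wti m(z)=W_{x}(\psi,\wti\psi)\big/\big(\varphi(z,x)\wti\varphi(z,x)\big)$ evaluated just to the right of $k_0$, the numerator $W(\psi,\wti\psi)$ does \emph{not} vanish to leading order precisely because the transfer matrices differ at the first point of disagreement; a careful asymptotic expansion (using \eqref{1.2}, \eqref{1.2nn} for $\varphi,\theta,\wti\varphi,\wti\theta$ on the shared interval $[0,k_0]$ and tracking one extra step) shows
\[
m(z)-\wti m(z)=C\,\big(\alpha_{k_0}-\wti\alpha_{k_0}\big)\,e^{-2(k_0-1/2)\Re\sqrt{-z}}\big(1+o(1)\big)
\]
along $\arg z=\alpha$, for some nonzero constant $C$ depending only on $s_0,\dots,s_{k_0-1}$. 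Since $k_0\le k$, we have $e^{-2(k_0-1/2)\Re\sqrt{-z}}$ decaying strictly slower than $e^{-2(k+1/2)\Re\sqrt{-z}}$ (as $\Re\sqrt{-z}\to\infty$), contradicting \eqref{1.9nn}. Hence $s_j=\wti s_j$ for $0\le j\le k$.

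\textbf{Main obstacle.} The delicate point is the converse: one must produce the \emph{exact} finite-interval identity for $m-\wti m$ with an error term controlled well enough to extract the leading exponential order, and then verify that the leading coefficient is genuinely nonzero whenever the first transfer-matrix parameters disagree. This requires bookkeeping of the $o(1)$ terms through one transfer step past $k_0$, and care that the polynomial-in-$\sqrt{-z}$ prefactors (which differ between $\varphi$ and $\theta$ blocks) do not conspire to cancel the leading term; the fundamental-system structure on $(k_0,k_0+1)$ and the explicit form of $c(\lfloor x\rfloor)$ in \eqref{1.2nnn} are what make this transparent.
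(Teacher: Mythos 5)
Your forward implication (\eqref{1.9n} $\Rightarrow$ \eqref{1.9nn}) is essentially the paper's argument and is sound: with $\varphi=\wti\varphi$, $\theta=\wti\theta$ on $[0,k+1)$ one divides $\psi-\wti\psi=(m-\wti m)\varphi$ by $\varphi$ and uses \eqref{1.7} together with the lower bound on $|\varphi|$ from \eqref{1.2}. One caveat: evaluating at $x=k+1/2$ exactly leaves a prefactor $|z|$ in front of $e^{-2(k+1/2)\Re\sqrt{-z}}$, and such a polynomial factor is \emph{not} ``absorbed into the $o(\cdot)$''; you should evaluate at some $x\in(k+1/2,k+1)$ so that the surplus $e^{-2(x-k-1/2)\Re\sqrt{-z}}$ kills it.

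The converse is where there is a genuine gap, and it is the hard half of the theorem. First, the ``exact Bennewitz identity'' $m-\wti m=W_x(\psi,\wti\psi)/(\varphi\,\wti\varphi)$ is not correct: on $[0,k_0]$, where the coefficients agree, one computes $W_x(\psi,\wti\psi)=(\wti m-m)W(\theta,\varphi)=\wti m-m$, so the identity is circular there, while for $x>k_0$ the two Weyl solutions solve \emph{different} equations, so this Wronskian is not conserved and no such clean formula exists; what one actually has is \eqref{nn1.12}, in which the extra term $\wti\varphi\theta-\varphi\wti\theta$ is precisely the obstruction you must control. Second, your claimed leading asymptotics $m-\wti m=C(\alpha_{k_0}-\wti\alpha_{k_0})e^{-2(k_0-1/2)\Re\sqrt{-z}}(1+o(1))$ with $C\neq 0$ is provably false: since $s_j=\wti s_j$ for $0\le j\le k_0-1$, the forward direction (with $k=k_0-1$) already forces $|m-\wti m|=o\big(e^{-2(k_0-1/2)\Re\sqrt{-z}}\big)$, so no nonzero coefficient can appear at that order; the true order is $e^{-2k_0\Re\sqrt{-z}}$ up to algebraic factors (e.g.\ for $k_0=0$ and constant sequences $m-\wti m=-(s_0-\wti s_0)\sqrt{-z}$). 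A corrected two-sided asymptotic at order $e^{-2k_0\Re\sqrt{-z}}$ with a \emph{verified} nonvanishing coefficient would still give your contradiction (as $2k_0<2k+1$), but that nonvanishing is exactly the difficulty you defer to ``a careful asymptotic expansion,'' and your incorrect exponent indicates the computation has not been carried out. The paper avoids this entirely: from \eqref{1.9nn} and \eqref{nn1.12} it deduces that the entire function $f(z)=\wti\varphi\theta-\varphi\wti\theta$, of order $1/2$ and tending to $0$ along every non-real ray, vanishes identically by Phragm\'en--Lindel\"of, and then recovers $s_j=\wti s_j$ algebraically by differentiating $\theta/\varphi=\wti\theta/\wti\varphi$ and using constancy of the Wronskian. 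You need either that complex-analytic step or an actual proof of the lower bound you assert.
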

	
	\begin{proof} Let $\varphi(z, \cdot)$, $\wti \varphi(z, \cdot)$, $\theta(z, \cdot)$, $\wti\theta(z, \cdot)$ be the Dirichlet and Neumann solutions corresponding to $s$ and $\wti s$ respectively.	 Then  for every $x\in\bbR_+\setminus\bbN$ the first equation in \eqref{1.2} yields
		\begin{equation}\lb{1.10}
		\frac{\varphi(z, x)}{\wti \varphi(z, x)}\rightarrow\frac{c(\lfloor x\rfloor)}{\wti c(\lfloor x\rfloor)},\text{\ as\ } z\rightarrow\infty,z\in\bbC\setminus[0,\infty),
		\end{equation} with $c,\wti c$ as in \eqref{1.2nnn}.
		Using the fact that $\frac{c(\lfloor x\rfloor)}{\wti c(\lfloor x\rfloor)}$ is a nonzero constant with respect to $z$ and \eqref{1.7}, \eqref{1.10} we obtain
		\begin{equation}
		\wti \varphi (z, x)\psi(z,x)\rightarrow0,\  \varphi (z, x)\wti \psi(z,x)\rightarrow0,
		\end{equation}
		as $z\rightarrow\infty$ along an arbitrary non-real ray. Therefore for every $x\in\bbR_+\setminus\bbN$, \eqref{1.5n} yields
		\begin{align}\lb{nn1.12}
		\wti \varphi (z, x)\theta(z,x)-\varphi (z, x)\wti \theta(z,x)
		+(m(z)-\wti m(z))\varphi (z, x)\wti \varphi (z, x)\rightarrow 0,
		\end{align}
		as $z\rightarrow\infty$ along an arbitrary non-real ray.

		We are ready to prove the``if" part.  Combining \eqref{1.2}, \eqref{1.9nn} and \eqref{nn1.12}, and fixing $x\in[0,k]\setminus\bbN$, we obtain
		\begin{equation} \lb{n1.12}
		\wti \varphi (z, x)\theta(z,x)-\varphi (z, x)\wti \theta(z,x)\rightarrow 0,
		\end{equation}
		as $z\rightarrow\infty$ along every non-real ray. Let us introduce
		\begin{equation}
		f(z):= \wti \varphi (z, x)\theta(z,x)-\varphi (z, x)\wti \theta(z,x).
		\end{equation}
		This is an entire function whose growth rate is at most $1/2$, cf.~\eqref{1.2}, \eqref{1.2nn}, and it is bounded along every non-real ray. Then, by the Phragm\'en--Lindel\"of principle, $f$ is bounded on $\bbC$. Thus it is constant and \eqref{n1.12} gives $f(z)=0$ for all $z\in\bbC$.
		
		That is, for all $x\in[0,k]\setminus\bbN$, $z\in\bbC$, one has
		\begin{equation}
		\frac{\wti \theta(z,x)}{\wti \varphi (z, x)}=\frac{ \theta(z,x)}{\varphi (z, x)}, x\in[0,k]\setminus\bbN.
		\end{equation}
		Taking the derivative with respect to $x$ and recalling that the Wronskian is conserved, we get
		\begin{equation}
		\frac{1}{\wti \mu (x^+)\wti \varphi ^2(z, x^+)}=\frac{1}{\mu(x^+)\varphi ^2(z, x^+)}.
		\end{equation}
		Hence, for each $0\leq j\leq k$
		\begin{equation}\lb{1.12}
		{\wti s_j \wti \varphi ^2(z, x^+)}={s_j \varphi ^2(z, x^+)},\
		\end{equation}
		and upon (right-)differentiating with respect to $x$
		\begin{equation}\lb{1.13}
		{\wti s_j \wti \varphi (z, x^+)}={s_j \varphi (z, x^+)}.
		\end{equation}
		Then $\eqref{1.12}$ and $\eqref{1.13}$ give
		\begin{equation}
		\left(\frac{\wti s_j}{s_j}\right)^2=\frac{\wti s_j}{s_j} \text{\ and\ }\wti s_{j}=s_j,\ 0\leq j\leq k,
		\end{equation}
		as asserted.
		
		Next, we prove the ``only if" part. Using \eqref{1.9n}  we get
		\begin{equation}
		\varphi(z,x)=\wti\varphi(z,x),\ \theta(z,x)=\wti\theta(z,x), x\in[0, k+1), z\in\bbC.
		\end{equation}
		Then \eqref{nn1.12} with $x\in [0, k+1/2]\setminus\bbN$ yields
		\begin{equation}
		|(m(z)-\wti m(z))\varphi (z, x)\wti \varphi (z, x)|=o(1),
		\end{equation}
		$z\rightarrow\infty$ along every non-real ray. Combining this and the first formula in \eqref{1.2} we get \eqref{1.9nn}.
	\end{proof}

	\subsection{Continuity of $m$-functions With Respect to Jumps.}

	\begin{lemma}\lb{lemma3.5}
		Fix a sequence $\{s(n)\}_{n=1}^{\infty}\subset\cJ^{\bbZ_+}$.  If $s(n)\rightarrow s$ as $n\rightarrow\infty$ with respect to the metric $d$, then $m_n(z)\rightarrow m(z)$ as $n\rightarrow\infty$ uniformly on compact subsets of $\bbC_+$.
	\end{lemma}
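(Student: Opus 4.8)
The plan is to run the classical nested Weyl-disk argument, using that the hypothesis $s(n)\to s$ (with respect to $d$) forces the impedance functions $\mu_n$ built from $s(n)$ via \eqref{1.1n} to coincide with $\mu$ on arbitrarily long initial intervals. Recall, cf.\ \cite[Ch.\ 9]{T09}, that for $z\in\bbC_+$ and $L>0$ the Weyl disk of $\tau$ on $[0,L]$ subject to the Neumann condition at $0$ is
\[
D_L(z)=\Big\{w\in\bbC:\ \int_0^{L}\big|\theta(z,t)+w\,\varphi(z,t)\big|^2\,d\mu(t)\le\tfrac{\Im w}{\Im z}\Big\},
\]
a closed disk of radius $r_L(z)=\big(2\,\Im z\int_0^{L}|\varphi(z,t)|^2\,d\mu(t)\big)^{-1}$; these disks shrink as $L\uparrow\infty$, one has $m(z)\in D_L(z)$ for every $L$, and the intersection $\bigcap_L D_L(z)$ is the single point $m(z)$ because $\tau$ is in the limit-point case at $+\infty$ (the constant function $1$ solves $\tau u=0$ but, since $\mu\ge1$, fails to lie in $L^2(\bbR_+;\mu)$). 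The same picture holds verbatim with $\mu_n$, $m_n$, $D_L^{(n)}$ in place of $\mu$, $m$, $D_L$. The crucial structural point is that $D_L(z)$ depends only on $\mu\!\upharpoonright_{[0,L)}$: nothing but $\varphi(z,\cdot)$, $\theta(z,\cdot)$ on $[0,L]$ and $d\mu$ on $[0,L)$ enters its definition.

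Fix a compact $K\subset\bbC_+$ and $\varepsilon>0$. Since the Dirichlet realization of $\tau$ is self-adjoint (limit-point case), $\varphi(z,\cdot)\notin L^2(\bbR_+;\mu)$ for every $z\in\bbC_+$; thus $g_L(z):=\int_0^{L}|\varphi(z,t)|^2\,d\mu(t)\to\infty$ pointwise as $L\to\infty$. As each $g_L$ is continuous in $z$ and $g_L$ is nondecreasing in $L$, a finite-subcover (Dini-type) argument upgrades this to $\inf_{z\in K}g_L(z)\to\infty$, so that $\sup_{z\in K}r_L(z)\le\big(2\,(\inf_{z\in K}\Im z)\,\inf_{z\in K}g_L(z)\big)^{-1}\to0$, using $\inf_{z\in K}\Im z>0$. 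Choose $L^*\in\bbN$ with $\sup_{z\in K}2\,r_{L^*}(z)<\varepsilon$. Because $s(n)\to s$ in $d$, there is $n_0$ such that for $n\ge n_0$ the sequences $s(n)$ and $s$ agree on the indices $0,\dots,L^*$, hence $\mu_n\equiv\mu$ on $[0,L^*]$, hence $D_{L^*}^{(n)}(z)=D_{L^*}(z)$. For such $n$ both $m_n(z)$ and $m(z)$ lie in this common disk, whose diameter is $2\,r_{L^*}(z)$, and therefore $|m_n(z)-m(z)|\le 2\,r_{L^*}(z)<\varepsilon$ for all $z\in K$. As $K$ and $\varepsilon$ were arbitrary, $m_n\to m$ uniformly on compact subsets of $\bbC_+$.

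The only step that is not pure bookkeeping is the uniformity $\inf_{z\in K}g_L(z)\to\infty$; it relies on joint continuity of $\varphi$ in $(z,t)$ (so that $g_L$ is continuous) together with a compactness argument, and I expect this to be the main, though modest, obstacle. As an alternative one can argue via normal families: $\{m_n\}$ is locally bounded in $\bbC_+$ since $m_n(z)\in D_1^{(n)}(z)=D_1(z)$ for $n$ large, so a subsequence converges locally uniformly to some Herglotz $\widetilde m$; applying Fatou to $\int_0^\infty|\psi_n(z,\cdot)|^2\,d\mu_n=\Im m_n(z)/\Im z$ together with the eventual identity $\mu_n\equiv\mu$ on bounded sets shows $\theta(z,\cdot)+\widetilde m(z)\varphi(z,\cdot)\in L^2(\bbR_+;\mu)$, whence $\widetilde m=m$ by limit-point uniqueness of the Weyl solution and the normalization $\psi(z,0)=1$; since every subsequential limit equals $m$, the whole sequence converges.
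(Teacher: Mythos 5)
Your proof is correct and follows essentially the same route as the paper's: nested Weyl disks whose radii shrink to zero as the interval length grows, combined with the observation that $s(n)\rightarrow s$ in the metric $d$ forces the coefficient $\mu_n$, hence the disk, to coincide with that of $s$ on arbitrarily long initial intervals, trapping $m_n(z)$ and $m(z)$ in a common small disk. The only cosmetic difference is that you obtain uniformity on compact subsets of $\bbC_+$ directly, via a Dini-type compactness argument showing $\sup_{z\in K}r_{L}(z)\rightarrow 0$, whereas the paper establishes pointwise convergence plus local uniform boundedness of $\{m_n\}$ and then invokes analyticity (Vitali/Montel).
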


	\begin{proof}
		The proof is based on the fact that the Weyl circles (more precisely their centers and radii) are continuous with respect to $z$ and eventually constant with respect to $n$.
		
		 Let $m(z,b, \alpha)\in\bbC$ be such that the function
		\begin{equation}
		\chi(z,x):=\theta(z,x)+m(z,b,\alpha) \varphi (z,x),\ x\in(0,b)
		\end{equation}
		satisfies the following condition at $b>0$,
		\begin{equation}
		\chi(z,b)\cos\alpha+\mu (b)\chi(z,b) \sin\alpha =0.
		\end{equation}
		Then
		\begin{equation}
		C(z,b):=\{m(z,b,\alpha): \alpha\in[0,\pi] \},
		\end{equation}
		is a circle in $\bbC$ centered at
		\begin{equation}
		O(z,b):=-\frac{W(\varphi(z,\cdot), \overline{\theta(z,\cdot)})(b)}{W(\theta(z,\cdot), \overline{\theta(z,\cdot)})(b)},
		\end{equation}
		with radius
		\begin{equation}\lb{1.15}
		r(z,b):=\frac{1}{|W(\theta(z,\cdot),\overline{\theta(z,\cdot)})(b)|}\rightarrow0, b\rightarrow\infty.
		\end{equation}
		The corresponding disk is denoted by $D(z,b)$. Then $D(z,b')\subset D(z,b)$ whenever $b'>b>0$, and $m(z)\in\cap_{b>0}D(z,b)$. Since $m_n, m$ are analytic in $\bbC_+$ it is enough to prove that  $m_n$ converges to $m$ pointwise and that $m_n$ are uniformly bounded on compact subsets of $\bbC_+$. To prove the former, fix $\varepsilon>0$. Then by \eqref{1.15} we have $r(z,b)<\varepsilon$ for $b\geq b(\varepsilon)$ for some $b(\varepsilon)>0$. Next, there exists $N(\varepsilon)>0$ such that
		\begin{equation} \lb{1.18}
		s_k(n)=s_k, 0\leq k\leq \lfloor b(\varepsilon) \rfloor,
		\end{equation}
		for all $n\geq N(\varepsilon)$. Denoting the Weyl disk, Dirichlet and Neumann solutions corresponding to $s(n)$ by $D_n$, $\varphi_n, \theta_n$ respectively, we note that \eqref{1.18} yields
		\begin{equation}
		\varphi_n(z,x)=\phi(z,x), \theta(z,x)=\theta_n(z,x), x\in (0, b(\varepsilon)),\  n\geq N(\varepsilon).
		\end{equation}
		Hence, $D_n(z,b(\varepsilon))=D(z,b(\varepsilon))$ $n\geq N(\varepsilon)$, and since $m_n(z)\in D_n(z,b(\varepsilon))$, we get
		\begin{equation}\lb{1.19}
		m_n(z),m(z)\in D(z,b(\varepsilon)), n\geq N(\varepsilon).
		\end{equation}
		Therefore,
		\begin{equation}
		|m_n(z)-m(z)|\leq 2\varepsilon, n\geq N(\varepsilon).
		\end{equation}

		Next, utilizing  \eqref{1.19} with $\varepsilon=1$ we get
		\begin{equation}
		|m_n(z)-m(z)|\leq 2 r(z,b(1)), n\geq N(1).
		\end{equation}
		Since $r(z,b(1))$ and $m(z)$ are uniformly bounded on compacts, we infer that the sequence $\{m_n\}_{n\geq1}$ is uniformly bounded on compact subsets of $\bbC_+$.
	\end{proof}

\subsection{Restriction Maps}

Throughout this section $m_{\pm}(s,z)$, $\varphi_{\pm}(s,\cdot)$, $\theta_{\pm}(s,\cdot)$ denote the Weyl--Titchmarsh functions, Dirichlet, and Neumann solutions corresponding to  the sequence $s\in\cJ^{\bbZ}$. Let $\cR_{\pm}$ denote the restriction operators $\cR_{\pm}s:=\{s_n\}_{n\in\bbZ_{\pm}}$.

For a set $\cZ\subset \bbR$ we define
\begin{equation}
\cD(\cZ):=\{ s\in \cJ^{\bbZ}: m_-(s, E+i 0)=-\overline{m_+(s, E+i0)},\  \text{for Lebesgue a.e.}\,  E\in\cZ \}.
\end{equation}

\begin{theorem}\lb{thm3.6}
	Suppose that $\cZ$ has positive Lebesgue measure. Then
	\begin{itemize}
		\item [(i)] The mapping
		\begin{equation}
		\cR: \cR_{-}(\cD(\cZ))\rightarrow\cR_{+}(\cD(\cZ)), \ \cR: \cR_{-}(s)\mapsto \cR_{+}(s), s\in\cD(\cZ),
		\end{equation}
		is well-defined (i.e. single valued).
		\item[(ii)] $\cR$ is bijective and uniformly continuous.
		\item[(iii)] $\cD(\cZ)$ is $T$ invariant and closed with respect to the metric $d$, cf. \eqref{3.21}.
	\end{itemize}
\end{theorem}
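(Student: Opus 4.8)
The plan is to reduce everything to two ingredients already available: the local Borg--Marchenko theorem, Theorem~\ref{thm1.4}, combined with elementary facts about Herglotz functions, and the continuity of the $m$-functions with respect to jumps, Lemma~\ref{lemma3.5}. I will use throughout that $m_{\pm}(s,\cdot)$ depends only on $\cR_{\pm}(s)$, that the analogue of Theorem~\ref{thm1.4} for $m_-$ holds by the reflection $x\mapsto-x$, and that the defining relation of $\cD(\cZ)$ says precisely that the whole-line operator associated with $s$ is reflectionless on $\cZ$. \emph{For (i):} if $s,\wti s\in\cD(\cZ)$ and $\cR_-(s)=\cR_-(\wti s)$, then $m_-(s,\cdot)\equiv m_-(\wti s,\cdot)$ on $\bbC_+$, so the defining relation gives $m_+(s,E+i0)=-\overline{m_-(s,E+i0)}=m_+(\wti s,E+i0)$ for a.e.\ $E\in\cZ$. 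Since $w\mapsto(w+i)^{-1}$ carries Herglotz functions on $\bbC_+$ to bounded analytic functions, the difference $(m_+(s,\cdot)+i)^{-1}-(m_+(\wti s,\cdot)+i)^{-1}$ is bounded and analytic on $\bbC_+$ with non-tangential boundary values vanishing on the positive-measure set $\cZ$, hence is identically zero; thus $m_+(s,\cdot)\equiv m_+(\wti s,\cdot)$. Then the left-hand side of \eqref{1.9nn} vanishes for every $k\in\bbN$, so Theorem~\ref{thm1.4} yields $s_j=\wti s_j$ for all $j\ge 0$, i.e.\ $\cR_+(s)=\cR_+(\wti s)$; this is the asserted single-valuedness.

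\emph{For (iii):} $T$-invariance is the soft part. The coefficient $\mu$ of the operator attached to $Ts$ is the one-edge translate of the one attached to $s$, so these operators are translates of each other and being reflectionless on the \emph{fixed} set $\cZ$ is translation invariant. Equivalently, translation by $T$ acts on $m_{\pm}(s,\cdot)$ through the M\"obius action of the one-step transfer matrix $A(z)$ of $\tau-z$, which is entire, unimodular, and real for $z\in\bbR$; for real $E$ a one-line computation shows that this action is compatible with the involution $w\mapsto-\overline{w}$, so the relation $m_-(s,E+i0)=-\overline{m_+(s,E+i0)}$ is preserved under $T$ and under $T^{-1}$. Closedness is the crux. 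If $s(n)\to s$ in the metric \eqref{3.21} with $s(n)\in\cD(\cZ)$, then Lemma~\ref{lemma3.5}, applied on each half-line, gives $m_{\pm}(s(n),z)\to m_{\pm}(s,z)$ uniformly on compact subsets of $\bbC_+$; one then invokes the standard stability of the reflectionless class --- namely, that the a.e.\ boundary identity on a fixed positive-measure set persists under locally uniform convergence of the $m$-functions --- to conclude $s\in\cD(\cZ)$. This passage from convergence inside $\bbC_+$ to the boundary behaviour on $\cZ$ is the only step that is not a routine computation, and it is where the Kotani-theoretic input genuinely enters; it is the part I expect to require the most care.

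\emph{For (ii):} surjectivity is built into the definition of the range of $\cR$, and injectivity is the argument of (i) with $+$ and $-$ exchanged: $\cR_+(s)=\cR_+(\wti s)$ gives $m_+(s,\cdot)\equiv m_+(\wti s,\cdot)$, hence $m_-(s,E+i0)=m_-(\wti s,E+i0)$ a.e.\ on $\cZ$, hence $m_-(s,\cdot)\equiv m_-(\wti s,\cdot)$ by Herglotz uniqueness, hence $\cR_-(s)=\cR_-(\wti s)$ by the reflected Borg--Marchenko theorem; so $\cR$ is a bijection whose inverse is the analogous restriction map. Since $\cJ$ is finite, $\cJ^{\bbZ}$ is compact, and by (iii) the set $\cD(\cZ)$ is compact; as $\cR_{\pm}$ are continuous, the spaces $\cR_{\pm}(\cD(\cZ))$ are compact metric spaces, and the graph $\{(\cR_-(s),\cR_+(s)):s\in\cD(\cZ)\}$ of $\cR$ --- which is a genuine function graph by (i) --- is the continuous image of the compact set $\cD(\cZ)$, hence compact, hence closed. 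A map into a compact Hausdorff space with closed graph is continuous, so $\cR$ is continuous; a continuous bijection between compact metric spaces is a homeomorphism, and continuous maps on compact metric spaces are uniformly continuous. Hence both $\cR$ and $\cR^{-1}$ are uniformly continuous, which completes the proof.
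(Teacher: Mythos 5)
Your proposal is correct and follows essentially the same route as the paper: well-definedness and bijectivity via Theorem~\ref{thm1.4} together with the uniqueness of a Herglotz function from its boundary values on a positive-measure set, $T$-invariance via the M\"obius action of the one-step transfer matrix on $m_{\pm}$, closedness via Lemma~\ref{lemma3.5} plus the stability of the reflectionless condition under locally uniform convergence of $m$-functions (the paper cites \cite[Lemma 5]{Ko85} for exactly this step you flag as delicate), and uniform continuity by compactness. Your closed-graph argument for continuity of $\cR$ is a cosmetic variant of the paper's observation that $\cR_{\pm}|_{\cD(\cZ)}$ are continuous bijections between compact metric spaces, hence homeomorphisms.
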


\begin{proof}
The restrictions $\cR_{\pm}(s)$ uniquely determine the Weyl--Titchmarsh functions $m_{\pm} (s,\cdot)$ by a Borg--Marchenko type result Theorem \ref{thm1.4}. Recalling that every Herglotz function is determined by its boundary values on a set of positive Lebesgue measure, we get that the Weyl--Titchmarsh functions $m_{-} (s,\cdot)$ and $m_{+} (s,\cdot)$ are in one-to-one correspondence since
 \[m_-(s, E+i 0)=-\overline{m_+(s, E+i0)},\ a.e.\  E\in\cZ,\ |\cZ|>0.\]
\[
\begin{tikzcd}
& s \arrow{dl}{}\arrow{dr}{} \\
\cR_-(s)  \arrow{rr}{\cR}\arrow{d}{\text{Theorem \ref{thm1.4}}} && \arrow{ll} \cR_+(s)\arrow{d}{\text{Theorem \ref{thm1.4}}}\\
m_- (s,\cdot) \arrow{u}{} \arrow{rr}{|\cZ|>0} && \arrow{u}\arrow{ll} m_+(s, \cdot)
\end{tikzcd}
\]
This proves that $\cR$ is well-defined and bijective. In order to show that it is uniformly continuous it is enough to check that $\cD(\cZ)$ is closed (hence, compact). Indeed, in this case $\cR_{\pm}$ are bijective, continuous mappings between compact metric spaces, thus they are homeomorphic and $\cR$ is uniformly continuous, see the top part of the diagram. The closedness of $\cD(\cZ)$ follows from Lemma \ref{lemma3.5} and \cite[Lemma 5]{Ko85}.

The $T$-invariance of $\cD(\cZ)$ can be deduced from the following relation between $m_{\pm}(s,z)$ and  $m_{\pm}(Ts,z)$,
	\begin{align}
	m_{\pm}(Ts, z)&=\frac{\mu(Ts, 0^{\pm})\psi'_{\pm}(Ts, 0^{\pm})}{\psi_{\pm}(Ts, 0^{\pm})}=\frac{\mu(s, 1^{\pm})\psi'_{\pm}(s, 1^{\pm})}{\psi_{\pm}(s, 1^{\pm})}\\
	&=\frac{\cosh{\sqrt{-z}}\mu(s, 0^{\pm})\psi'_{\pm}(s, 0^{\pm})+ \frac{\sinh\sqrt{-z}}{\sqrt{-z}}\psi_{\pm}(s, 0^{\pm})}{{\sqrt{-z}\sinh\sqrt{-z}}\mu(s, 0^{\pm})\psi'_{\pm}(s, 0^{\pm})+ \cosh\sqrt{-z}\psi_{\pm}(s, 0^{\pm})}\\
	&=\frac{m_{\pm}(s,z)\cosh{\sqrt{-z}}+ \frac{\sinh\sqrt{-z}}{\sqrt{-z}}}{m_{\pm}(s,z){\sqrt{-z}\sinh\sqrt{-z}}+ \cosh\sqrt{-z} }.
	\end{align}
\end{proof}

\section{Ergodic Sturm--Liouville Operators in Impedance Form}\lb{section4}

This section concerns the spectral analysis of the full-line version of the operator \eqref{nn1.1n} with a dynamically defined sequence of sphere numbers $\{s_n\}_{n\in \bbZ}$. First, we show that the spectrum is given by the zero set of the Lyapunov exponent whenever the underlining cocycles are uniform, see  Lemma \ref{lemma4.3}. Further, assuming Boshernitzan's condition and using Kotani theory we derive the main result of this section stating that the spectrum is given by a generalized Cantor set of zero Lebesgue measure.

Let $\Omega$ be a compact metric space, let $T:\Omega\rightarrow\Omega$ be a homeomorphism, and suppose that $(\Omega, T)$\footnote{at this point $(\Omega, T)$ is not assumed to be a minimal subshift satisfying (B)} is uniquely ergodic with the unique $T-$invariant probability measure $\nu$. For a non-constant measurable function $f:\Omega\rightarrow\cJ$, let
		\begin{equation}
		\mu_{\omega}(x):=\sum_{n=-\infty}^{\infty}f(T^n\omega) f(T^{n+1}\omega)\chi_{[n, {n+1})}(x), x\in \bbR, \omega\in\Omega.
		\end{equation}
		Let $\tau_{\omega}$ be defined as in \eqref{n1.1n} with $\mu=\mu_{\omega}$ and introduce a self-adjoint  operator
		\begin{align}
		\begin{split}\lb{3.25}
			&H_{\omega}:\dom(H_{\omega})\subset L^2(\bbR; \mu_{\omega})\rightarrow L^2(\bbR; \mu_{\omega}), H_{\omega} u:= \tau_{\omega} u,\ u\in\dom(H_{\omega}),\\
		&\dom(H_{\omega})=\{u\in L^2(\bbR, \mu_{\omega}): u, \mu_{\omega} u'\in AC(\bbR), \tau_{\omega} u\in L^2(\bbR; \mu_{\omega})\}.
		\end{split}
		\end{align}
		
		For $\nu$ almost every $\omega\in\Omega$, the spectrum of $H_{\omega}$ is given be a deterministic set $\Sigma\subset\bbR$. Moreover, by \cite[Proposition V.2.4 and Remark V.2.5]{CL} there exists $\Sigma_{\bullet}\subset \bbR$ such that for $\nu-$a.e. $\omega$ one has
		\begin{equation}
		\sigma_{\bullet}(H_{\omega})=\Sigma_{\bullet},\ \bullet\in\{\text{ac, sc, pp, disc}\}.
		\end{equation}
		A much finer spectral analysis of the ergodic family $H_{\omega}, \omega\in\Omega$ is possible through the dynamical approach on which we focus next. First, notice that
		\begin{equation}\lb{n4.1}
		\tau_{\omega}u=Eu,\  u, \mu_{\omega}u\in AC_{loc}(\bbR_+)
		\end{equation}
		holds if and only for every $n\in\bbZ$, one has $-u''(x)=Eu(x), x\in(n-1, n)$ and
		\begin{align}
		&\begin{bmatrix}
		{u}(n^+)\\
		{u}'(n^+)
		\end{bmatrix}
		=
		\begin{bmatrix}
		\cos\sqrt{E}& \frac{\sin\sqrt{E}}{{\sqrt{E}}}\\
		{-{\frac{f(T^{n-1}\omega)\sqrt{E}\sin\sqrt{E}}{f(T^{n+1}\omega)}}}&{\frac{f(T^{n-1}\omega)\cos\sqrt{E}}{f(T^{n+1}\omega)}}
		\end{bmatrix}
		\begin{bmatrix}
		{u}((n-1)^+)\\
		{u}'((n-1)^+)
		\end{bmatrix}.
		\end{align}
		The latter can be rewritten as follows,
		\begin{align}
		\begin{bmatrix}
		f(T^{n+1}\omega){u}(n^+)\\
		{u}'( n^+)
		\end{bmatrix}&
		=
		\begin{bmatrix} \frac{f(T^{n+1}\omega)\cos\sqrt{E}}{f(T^{n-1}\omega)}
		& \frac{f(T^{n+1}\omega)\sin\sqrt{E}}{{\sqrt{E}}}\\
		{-{\frac{\sqrt{E}\sin\sqrt{E}}{f(T^{n+1}\omega)}}}&{\frac{f(T^{n-1}\omega)\cos\sqrt{E}}{f(T^{n+1}\omega)}}
		\end{bmatrix}
		\begin{bmatrix}
		f(T^{n-1}\omega){u}((n-1)^+)\\
		{u}'((n-1)^+)
		\end{bmatrix}\\
		&=\mathcal{M}^E(T^{n-1}\omega)\begin{bmatrix}
		f(T^{n-1}\omega){u}((n-1)^+)\\
		{u}'((n-1)^+)
		\end{bmatrix},
		\end{align}
		where the mapping $\cM^E:\Omega\rightarrow \text{SL}(2,\bbR)$ is given by
		\begin{equation}\lb{4.2aa}
				\mathcal{M}^E(\omega):=\begin{bmatrix} \frac{f(T^2\omega)\cos\sqrt{E}}{f(\omega)}
			& \frac{f(T^2\omega)\sin\sqrt{E}}{{\sqrt{E}}}\\
			{-{\frac{\sqrt{E}\sin\sqrt{E}}{f(T^2\omega)}}}&{\frac{f(\omega)\cos\sqrt{E}}{f(T^2\omega)}}
			\end{bmatrix}.
		\end{equation}
		The spectral properties of $H_{\omega}$ will be described using an SL$(2, \bbR)$ cocycle $(\omega, n)\mapsto \cM^E_n(\omega)$ over $T$ given by
		
		\begin{equation}\lb{4.2a}
	\mathcal{M}_n^E(\omega)=\begin{cases} 
{\mathcal{M}^E\left(T^{n-1}\omega \right)\times \cdots \times \mathcal{M}^E(\omega),} & {n \geq 1}, \\
{I_{2},} & {n=0}, \\
{(\mathcal{M}^E\left(T^{n}\omega \right))^{-1}\times \cdots \times \mathcal{M}^E(T^{-1}\omega)^{-1},} & {n \leq-1}.\end{cases}
		\end{equation}
	
		The Lyapunov exponent of this cocycle is defined by
		\begin{equation}\lb{n1.20}
		L(E):=\lim_{n\rightarrow\infty}\frac{1}{n}\int_{\Omega}\log\|\mathcal{M}_n^E(\omega)\|d\nu(\omega)\geq0,
		\end{equation}
		by Kingman's Subadditive Ergodic Theorem, one has
		\begin{equation}\lb{nn1.20}
		L(E)=\lim\limits_{n\rightarrow\infty} \frac1n\log\|M^E_n(\omega)\|,
		\end{equation}
		for $\nu$-almost every $\omega$. We say that the function $\cM^E$ is {\it uniform} if the limit \eqref{nn1.20} exists for all $\omega\in\Omega$ and the convergence is uniform.

		\subsection{Dynamical Description of The Spectrum}
	
		The main goal of this section is to describe the spectrum of $H_{\omega}$ in terms of the cocycle $(\omega, n)\mapsto M^E_n(\omega)$. To that end, we first define a set of energies corresponding to uniformly hyperbolic cocycles,
		\[\cU\cH=\left\{E \in \mathbb{R} \Big| \begin{matrix}
		\text{there exist\ } \lambda>1, C>0 \text { such that } \\
		\left\|\mathcal{M}_n^E(\omega)\right\| \geq C \lambda^{|n|}  \text{\ for all\ } \omega \in \Omega, n\in\mathbb{Z}
		\end{matrix} \right\}. \]
		We note that $\cU\cH$ is open by \cite[Corollary 2.2]{DFLY}. With the Lyapunov exponent $L(E)$ defined in \eqref{n1.20}, denote
		\begin{equation}
		\mathcal{Z}:=\left\{E\in\bbR: L(E)=0\right\},\ \mathcal{NUH}:=\left\{E \in \mathbb{R} : L(E)>0\right\} \backslash \mathcal{UH}.
		\end{equation}
		Then we arrive at the following disjoint partition of the real line
		\begin{equation}
		\mathbb{R}=\mathcal{Z}\sqcup \mathcal{NUH}\sqcup \mathcal{UH}.
		\end{equation}

		\begin{proposition}
			One has
			\begin{equation}\lb{n1.21}
			\bigcup_{\omega\in\Omega}\sigma(H_{\omega})=\bbR\setminus\cU\cH.
			\end{equation}
		\end{proposition}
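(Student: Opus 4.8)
The plan is to establish the identity $\bigcup_{\omega\in\Omega}\sigma(H_\omega)=\bbR\setminus\cU\cH$ by proving the two inclusions separately, both via the standard transfer-matrix characterization of the spectrum for one-dimensional ergodic operators adapted to the impedance form \eqref{n1.1n}. The starting point is that $E\notin\sigma(H_\omega)$ if and only if the equation $\tau_\omega u=Eu$ admits an exponential dichotomy, i.e., has solutions decaying at $+\infty$ and $-\infty$ respectively with uniform rate; equivalently, by the reduction recorded in \eqref{n4.1}--\eqref{4.2a}, the cocycle $\cM^E_n(\omega)$ is uniformly hyperbolic along the single orbit of $\omega$. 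Thus the heart of the matter is a Combes--Thomas / Johnson-type argument relating $\sigma(H_\omega)$ to the hyperbolicity of the cocycle over the full base $\Omega$.

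First I would prove $\bbR\setminus\cU\cH\subseteq\bigcup_{\omega\in\Omega}\sigma(H_\omega)$. Fix $E\notin\cU\cH$; by definition of $\cU\cH$ there is no uniform exponential bound $\|\cM^E_n(\omega)\|\geq C\lambda^{|n|}$ valid for all $\omega\in\Omega$. I would argue by contraposition: if $E\notin\sigma(H_\omega)$ for every $\omega\in\Omega$, then by the resolvent bound each such $\omega$ supports an exponential dichotomy for $\tau_\omega u=Eu$, hence the cocycle restricted to the orbit of $\omega$ is hyperbolic; minimality of $(\Omega,T)$ together with the standard Johnson continuity argument (using that the orbit of any point is dense, and uniform hyperbolicity is an open, orbit-closure-stable condition, cf. \cite[Corollary 2.2]{DFLY}) upgrades this to uniform hyperbolicity over all of $\Omega$, i.e., $E\in\cU\cH$, a contradiction. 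Here I would lean on the $T$-invariance and closedness machinery already available, and on the fact that spectra $\sigma(H_\omega)$ are upper semicontinuous in $\omega$ so that $E\notin\sigma(H_\omega)$ for all $\omega$ is a robust condition.

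For the reverse inclusion $\bigcup_{\omega\in\Omega}\sigma(H_\omega)\subseteq\bbR\setminus\cU\cH$, fix $E\in\cU\cH$ and I must show $E\notin\sigma(H_\omega)$ for every $\omega\in\Omega$. Uniform hyperbolicity of the cocycle yields, for each $\omega$, the existence of the stable and unstable directions depending continuously on $\omega$ and contracted/expanded at a uniform rate $\lambda>1$; translating back through the correspondence in \eqref{n4.1} between solutions of $\tau_\omega u=Eu$ and the vectors $(f(T^n\omega)u(n^+),u'(n^+))^\top$, this produces genuine $L^2(\bbR;\mu_\omega)$ solutions at $\pm\infty$ with the Weyl disk degenerating, so $E$ lies in the resolvent set of $H_\omega$. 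One must check that the exponential decay in the discrete transfer-matrix sense implies $L^2(\mu_\omega)$-decay of the continuum solution on each unit interval; this is routine because on $(n,n+1)$ the solution solves the constant-coefficient equation $-u''=Eu$ and its $L^2$ norm on that interval is comparable (uniformly in $n$, since $\mu_\omega$ takes finitely many values bounded above and below) to $|(u(n^+),u'(n^+))|$.

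The main obstacle I anticipate is the passage between the two ``single orbit'' and ``whole space'' notions of hyperbolicity in the first inclusion, that is, upgrading hyperbolicity along one dense orbit to uniform hyperbolicity over $\Omega$. This is exactly where minimality of the subshift is essential, and where I would invoke the openness of $\cU\cH$ from \cite[Corollary 2.2]{DFLY} together with the Johnson-type perturbation/continuity argument; one has to be careful that the relevant objects (transfer cocycle $\cM^E$, the function $f$) are continuous on $\Omega$ so that small changes of base point produce small changes of cocycle, which is why the setup restricts to $f$ locally constant / the subshift structure. A secondary technical point is the precise form of the Combes--Thomas estimate in the impedance setting, ensuring that membership in the resolvent set is genuinely equivalent to (and not merely implied by) exponential dichotomy of solutions; I would handle this by the Weyl-disk/limit-point analysis already set up in the Weyl--Titchmarsh section, where the radius $r(z,b)\to0$ controls square-integrable solutions at $\pm\infty$.
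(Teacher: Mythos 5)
Your strategy is essentially Johnson's theorem (``resolvent set $=$ uniform hyperbolicity''), whereas the paper argues both inclusions more softly and never proves the equivalence you take as a starting point. For $\bbR\setminus\cU\cH\subseteq\bigcup_\omega\sigma(H_\omega)$ the paper does \emph{not} argue by contraposition: it invokes \cite[Theorem 1.2]{DFLY}, which says that failure of uniform hyperbolicity over the compact base produces a \emph{bounded} orbit $\|\cM^E_n(\omega)v\|\le 1$ for some single $\omega$, hence a bounded generalized eigenfunction, and then Lemma \ref{lemma2.2} (the Sch'nol-type result proved in the same section) puts $E$ in $\sigma(H_\omega)$. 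This completely sidesteps the two hardest steps of your route: a Combes--Thomas estimate in impedance form, and the upgrade from orbitwise exponential dichotomy (with $\omega$-dependent constants) to uniform hyperbolicity. For the converse inclusion the paper uses openness of $\cU\cH$ plus the eigenfunction-expansion result of Boutet de Monvel--Stollmann: spectrally almost every $\lambda$ admits a subexponentially bounded solution, while every solution at every $E$ in an open interval $\cI\subset\cU\cH$ grows exponentially on a half-line, so $\cI$ carries zero spectral measure and misses $\sigma(H_\omega)$. Your Green's-function construction from the stable/unstable solutions is a legitimate, more self-contained alternative for this direction; the paper's choice trades that explicit construction for a citation.

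Two points in your write-up need repair before it is a proof. First, the step ``$E\notin\sigma(H_\omega)$ for all $\omega$ implies exponential dichotomy along each orbit'' is exactly the content of a Combes--Thomas bound, which is not supplied by the Weyl-disk analysis you point to (that analysis lives at non-real $z$ and gives $L^2$ Weyl solutions, not exponential decay at a real $E$ in the resolvent set); you would have to prove this estimate for $\tau_\omega$. Once you have dichotomy at every $\omega$, the cleanest upgrade to $E\in\cU\cH$ is again the bounded-orbit criterion of \cite[Theorem 1.2]{DFLY} (no bounded orbit can coexist with a dichotomy), not openness of $\cU\cH$ in $E$ or minimality, which are red herrings here. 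Second, ``$L^2$ solutions at $\pm\infty$ with the Weyl disk degenerating, so $E$ lies in the resolvent set'' is not a valid inference at real $E$: if the two decaying solutions are proportional, $E$ is an eigenvalue. You need transversality of the stable and unstable directions (which uniform hyperbolicity does give you) and then a Schur-test verification that the kernel $u_-(x\wedge y)u_+(x\vee y)/W(u_-,u_+)$ defines a bounded inverse of $H_\omega-E$ on $L^2(\bbR;\mu_\omega)$. With these additions your argument goes through, but it is substantially longer than the paper's.
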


		\begin{proof}
			Assume that $E_0\in\cU\cH$ and fix $\omega\in\Omega$. Since $\cU\cH$ is open there is an open interval $\cI\subset \cU\cH$ containing $E_0$.  For each $E\in\cI$ any solution of the equation $\tau_{\omega}u=Eu$ grows exponentially fast on at least one half-line. By \cite[Theorem 1.1]{BdMS}\footnote{our operator is of the form described in \cite[Section 3.2]{BdMS}, hence, the referenced result is applicable}, for spectrally almost every $\lambda\in\bbR$ there is an $L^2$-subexponentially bounded solution of $\tau_{\omega}f=\lambda f$. Since $\cI$ contains no such $\lambda$ we conclude that $E_0\not\in\sigma(H_{\omega})$
			That is, \[\bigcup_{\omega\in\Omega}\sigma(H_{\omega})\subset\bbR\setminus\cU\cH. \]
			
			Next, assume that $E\in\bbR\setminus\cU\cH$. Then the cocycle $(T,\cM^E)$ admits a bounded orbit, i.e., there exist $\omega\in\Omega$ and $v\in \{x\in \bbR^2: \|x\|=1 \}$ such that
			\begin{equation}
			\|\cM^E_n(\omega)v\|\leq 1,
			\end{equation}
			for all $n\in\bbZ$, see e.g. \cite[Theorem 1.2]{DFLY}. Therefore, the equation $\tau_{\omega}u=Eu$ admits a bounded solution $u_{0}$  satisfying all vertex conditions. Thus by Lemma \ref{lemma2.2} we have $E\in\sigma(H_{\omega})$.
		\end{proof}

	Assuming uniformity of the cocycle, we will show the spectrum of every $H_{\omega}$ is given by the right-hand side of \eqref{n1.21}, which in turn reduces to $\cZ$. To that end, we first prove a Sch'nol-type result in the deterministic setting.

	\begin{lemma}\lb{lemma2.2} Let $\tau,\mu$  be defined as in \eqref{n1.1n} for a two sided sequence $\{s_n\}_{n\in\bbZ}\subset \cJ$, and let $H$ be the corresponding self-adjoint operator in $L^2(\bbR)$ $($a two sided version of \eqref{nn1.1n}$)$. Suppose that $w$ is a subexponentially bounded solution
	\[ \tau w=Ew, E\in\bbR, w, \mu_{\omega}w\in AC_{loc}(\bbR), \]
	 that is, for every $\kappa>0$ there exists $C>0$ such that
		\begin{equation}\lb{new5}
		|w(x)|\leq C e^{\kappa x}\text{\ for all\ } x\in\bbR.
		\end{equation}
		Then $E\in\sigma(H)$.
	\end{lemma}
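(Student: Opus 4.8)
The plan is to establish the Sch'nol-type statement by a Weyl-sequence argument adapted to the impedance form: produce, from the subexponentially bounded solution $w$, a sequence of compactly supported approximate eigenfunctions $w_N\in\dom(H)$ with $\|(H-E)w_N\|/\|w_N\|\to 0$, which forces $E\in\sigma(H)$ since $H$ is self-adjoint. The natural choice is $w_N := \varphi_N w$, where $\varphi_N$ is a cutoff equal to $1$ on $[-N,N]$, supported in $[-2N,2N]$, and smooth (or at least piecewise linear so that $\mu_\omega w_N'\in AC_{\loc}$ is respected across the integer points, using Remark \ref{1.1nn}); one must check $w_N$ lies in $\dom(H)$, i.e. that $w_N$ and $\mu_\omega w_N'$ are locally absolutely continuous and satisfy the matching conditions $s_{n-1}w_N'(n^-)=s_n w_N'(n^+)$ inherited from $w$, which holds because $\varphi_N$ can be taken continuous and, say, constant near each integer in the transition region, so the jump conditions reduce to those for $w$.

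The core computation is that $(H-E)w_N = \tau(\varphi_N w) - E\varphi_N w$ reduces, since $\tau w=Ew$, to a commutator term supported on the annulus $A_N:=([-2N,-N]\cup[N,2N])$ involving $\varphi_N'$ and $\varphi_N''$ paired against $w$ and $\mu_\omega w'$. Using the impedance-form expression $\tau f = -\tfrac1\mu(\mu f')'$, one gets $(\tau-E)(\varphi_N w) = -\tfrac1{\mu_\omega}\big[(\mu_\omega \varphi_N' w)' + \mu_\omega \varphi_N' w'\big]$, so in $L^2(\bbR;\mu_\omega)$ the error norm is controlled by $\|w\|_{L^2(A_N;\mu_\omega)}$ and $\|w'\|_{L^2(A_N;\mu_\omega)}$ times $N^{-1}$-scale derivatives of the cutoff; here one also needs a Caccioppoli-type bound $\int_{A_N}\mu_\omega |w'|^2 \lesssim \int_{\widetilde A_N}\mu_\omega|w|^2$ on a slightly enlarged annulus, obtained by testing $\tau w=Ew$ against $\chi^2 w$ for an auxiliary cutoff $\chi$ and integrating by parts. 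Since $\mu_\omega$ is bounded above and below (the $s_n$ take finitely many positive values), all the weight factors are harmless.

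The denominator $\|w_N\|_{L^2(\bbR;\mu_\omega)}\geq \|w\|_{L^2([-N,N];\mu_\omega)}$ is where subexponential boundedness \eqref{new5} enters together with the fact that $w$ is not identically zero: one shows along a subsequence $N_k$ that $\|w\|^2_{L^2(A_{N_k};\mu_\omega)}/\|w\|^2_{L^2([-N_k,N_k];\mu_\omega)}\to 0$. This is the standard ``slowly growing mass'' lemma: if $R_N:=\int_{[-N,N]}\mu_\omega|w|^2$, then $R_N$ is nondecreasing and, by \eqref{new5}, $R_N \leq C' e^{2\kappa N}$ for every $\kappa>0$, so $\liminf (R_{2N}-R_N)/R_N$ cannot be bounded below by a positive constant — otherwise $R_N$ would grow at least geometrically along $N, 2N, 4N,\dots$, i.e. exponentially, contradicting subexponential growth for small $\kappa$. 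Hence along a suitable sparse subsequence $R_{2N_k}/R_{N_k}\to 1$, making the annular mass negligible relative to the bulk mass, and $\|(H-E)w_{N_k}\|/\|w_{N_k}\|\to 0$.

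The main obstacle is the bookkeeping at the integer vertices: one must ensure the cutoff functions are chosen so that $w_N\in\dom(H)$ exactly (continuity plus the weighted-derivative matching from Remark \ref{1.1nn}), and that the Caccioppoli estimate is carried out correctly in the impedance form with the jump conditions producing no boundary contributions — the cleanest route is to place all cutoff transitions strictly inside intervals $(n,n+1)$, so that near each integer $w_N$ equals a scalar multiple of $w$ and inherits its admissibility automatically. Everything else is routine once the weights $\mu_\omega$ are recognized as two-sided bounded.
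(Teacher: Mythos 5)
Your overall strategy (a Weyl sequence built from cutoffs of $w$) is the same as the paper's, and your handling of the vertex conditions --- placing all cutoff transitions strictly inside intervals $(n,n+1)$ and keeping $\varphi_N$ constant near each integer so that $\varphi_N w$ inherits the matching conditions from $w$ --- is a correct way to work at the operator level. (The paper sidesteps this issue by working with the quadratic form $\mathfrak h[u,v]=\langle u',v'\rangle_{L^2(\bbR,\mu)}$ on $H^1(\bbR)$, so only continuity of the cutoff is needed.) The commutator computation and the Caccioppoli bound are fine.

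The genuine gap is the ``slowly growing mass'' step. With $R_N:=\int_{-N}^{N}\mu|w|^2$, you claim that $\liminf_N (R_{2N}-R_N)/R_N>0$ would force exponential growth of $R_N$. It would not: iterating $R_{2N}\geq(1+\delta)R_N$ along $N_0,2N_0,4N_0,\dots$ gives $R_{2^kN_0}\geq(1+\delta)^kR_{N_0}$, i.e.\ $R_N\gtrsim N^{\log_2(1+\delta)}$ --- only polynomial growth in $N$, perfectly consistent with \eqref{new5}. (Any solution with $|w(x)|\sim|x|$ already has $(R_{2N}-R_N)/R_N$ bounded below.) So the subsequence with $R_{2N_k}/R_{N_k}\to1$ need not exist, and the extra factor $N^{-1}$ gained from the wide cutoff does not rescue the argument either, since subexponential bounds still allow $R_{2N}/R_N$ to exceed $N^2$ (e.g.\ $R_N\sim\exp(C(\log N)^2)$ is subexponential). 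The fix is to use annuli of \emph{fixed} width, exactly as the paper does: take $\varphi_n=1$ on $[-n,n]$, supported in $[-n-1,n+1]$, with $\sup_n\|\varphi_n'\|_\infty<\infty$; the error is then controlled by $\|w\|_{H^1((n,n+1)\cup(-n-1,-n))}\lesssim\|w\|_{L^2((n,n+1)\cup(-n-1,-n))}$ over a unit annulus, and if the ratio of unit-annulus mass to bulk mass were bounded below by $\delta>0$ one would get $R_{n+1}\geq(1+\delta)R_n$ for all large $n$, hence genuinely exponential growth $R_n\gtrsim(1+\delta)^n$, contradicting \eqref{new5}. With that replacement your argument closes.
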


	\begin{proof}  Our strategy is to construct a Weyl sequence for $E$. First, we note that the quadratic form of $H$ (acting in $L^2(\bbR_+, \mu)$) is given by
		\begin{align}
		&\mathfrak h[u,v]:=\langle u',v'\rangle_{L^2(\bbR_+, \mu)},\dom(\mathfrak h)= H^1(\bbR).
		\end{align}
		Indeed, for every $u\in \dom(\mathfrak h)$ and every compactly supported $v\in \dom(H)$, $\supp(v)\in(-K,K)$, we have
		\begin{align}
		&\langle u,Hv\rangle_{L^2(\bbR_+,\mu)}=-\langle u,(\mu v')'\rangle_{L^2(\bbR_+,dx)}\\
		&=-\sum_{j=-K+1}^{K}\int_{{j-1}}^{{j}} s_j s_{j-1} \overline{u(x)}v''(x) \, dx=\sum_{j=-K+1}^{K}\int_{{j-1}}^{{j}} s_j s_{j-1} \overline{u'(x)}v'(x)dx\\
		&\quad+\sum_{j=-K+1}^{K}s_js_{j+1}\overline{u(j^+)} v'(j^+)-s_js_{j-1}\overline{u(j^-)} v'(j^-)\no\\
		&=\langle u',v'\rangle_{L^2(\bbR, \mu)}+\sum_{j=-K+1}^{K}(s_js_{j+1}v'(j^+)-s_js_{j-1}v'(j^-))\overline{u(j^+)} =\langle u',v'\rangle_{L^2(\bbR, \mu)}.
		\end{align}
		For every $n\in\bbN$, let $\varphi_n\in C^{\infty}(\bbR)$ be a mollifier taking values in $[0,1]$ and satisfying
		\begin{equation}
		\varphi_n(x):=\begin{cases}
		1& 0\leq |x|\leq n, \\
		0&|x|\geq n+1;
		\end{cases}
		\text{\ \ and\ \ }\sup_{n\geq 1}\|\varphi'_n\|_{L^{\infty}(\bbR, dx)}<\infty.
		\end{equation}
		We claim that \[\left\{\frac{w\varphi_n}{\|w\chi_{(-n,n)}\|}_{L^2(\R, \mu)}\right\}_{n\geq 1}\subset \dom(\mathfrak h)\] is a Weyl sequence for $E$. First, we note that the $L^2(\bbR, \mu)$ norm of all elements of this sequence is at least one. Next, fix $u\in \dom(\mathfrak h)$, $\|u\|_{H^1(\bbR)}\leq 1$, and denote $I_n:=(n, n+1)\cup (-n-1,-n)$. Then one has
		\begin{align}
	\begin{split}\lb{new4}
	&{\mathfrak h[u,\varphi_n w]-E\langle u,\varphi_n w\rangle_{L^2(\R, \mu)}}\\
	&={\langle u',\varphi_n w'\rangle_{L^2(\R, \mu)}+\langle u,\varphi_n' w'\rangle_{L^2(\R, \mu)}-E\langle u,\varphi_n w\rangle_{L^2(\R, \mu)}}\\
	&= \int_{-n}^{n} \overline{u'(x)}w'(x)\mu(x)dx-E\int_{-n}^{n} \overline{u(x)}w(x)\mu(x)dx+\langle u',\chi_{I_n}\varphi_n w'\rangle_{L^2(\R, \mu)}\\
	&\quad-E\langle u,\chi_{I_n}\varphi_n w\rangle_{L^2(\R, \mu)}+\langle u,\varphi'_n\chi_{I_n} w'\rangle_{L^2(\R, \mu)}\\
	&= \langle u, (\tau w-Ew)\chi_{(-n, n)}\rangle_{L^2(\R, \mu)}\\
	&+s_ns_{n-1}\overline{u(n^-)}w'(n^-)-s_{-n}s_{-n+1}\overline{u(-n^+)}w'(-n^+)+\langle u',\chi_{I_n}\varphi_n w'\rangle_{L^2(\R, \mu)}\\
	&\quad-E\langle u\chi_{I_n},\varphi_n w\rangle_{L^2(\R, \mu)}+\langle u, \chi_{I_n}\varphi'_nw'\rangle_{L^2(\R, \mu)}.
	\end{split}
	\end{align}
		Sobolev--type inequalities, see, for example, \cite[IV.1.2]{K80}, yield
		\begin{align}
		\begin{split}
		&|u(x_0)|\lesssim \|u\|_{H^1(I_n)}, x_0\in\{-n,n\},\\
		&|w'(x_0)|\lesssim \|w\|_{H^1(I_n)}\lesssim \|w\|_{L^2(I_n, dx)}, \lb{new2}
		\end{split}
		\end{align}
		where we used $-w''(x)=Ew(x)$, $x\in I_n$ to get the last inequality in \eqref{new2}.
		Combining \eqref{new4}, \eqref{new2}, and the Cauchy--Schwarz inequality, we get
		\begin{align}
		&\frac{|{\mathfrak h[u,\varphi_n w]-E\langle u,\varphi_n w\rangle_{L^2(\R, \mu)}}|}{\|w\chi_{(-n,n)}\|_{L^2(\R, \mu)}}\lesssim\frac{\|u\|_{H^1(I_n)} \|w\|_{H^1(I_n)}}{\|w\chi_{(-n,n)}\|_{L^2(\R, dx)}}\\
		&\qquad\lesssim \frac{\|w\chi_{(n, n+1)\cup (-n-1,-n)}\|_{L^2(\R, dx)}}{\|w\chi_{(-n,n)}\|_{L^2(\R, dx)}}.\lb{new1}
		\end{align}
		The right-hand side of \eqref{new1} tends to zero as $n\rightarrow\infty$, for otherwise the norm \[\|w\chi_{(n, n+1)\cup (-n-1,-n)}\|_{L^2(\R, dx)}\] would grow exponentially, contradicting \eqref{new5}.
	\end{proof}
	
		\begin{lemma}\lb{lemma4.3}
		Suppose that $\cM^E$ is uniform for all $E\in\bbR$. Then
		\begin{equation}\lb{n1.22}
		\sigma(H_{\omega})= \cZ \text{\ for all\ }\omega\in\Omega.
		\end{equation}
	\end{lemma}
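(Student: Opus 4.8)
The plan is to promote the almost-everywhere spectral information already available---from \cite{CL} and from the Proposition establishing \eqref{n1.21}---to an identity valid for \emph{every} $\omega\in\Omega$, the uniformity hypothesis being exactly what bridges that gap. Recall the disjoint partition $\bbR=\cZ\sqcup\mathcal{NUH}\sqcup\mathcal{UH}$. First I would record the easy inclusion: the opening part of the proof of the Proposition establishing \eqref{n1.21} works for an arbitrary fixed $\omega$ and shows that $E_0\in\mathcal{UH}$ implies $E_0\notin\sigma(H_\omega)$; hence $\sigma(H_\omega)\subseteq\bbR\setminus\mathcal{UH}$ for all $\omega\in\Omega$.

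Next I would use uniformity to show $\mathcal{NUH}=\emptyset$, so that $\bbR\setminus\mathcal{UH}=\cZ$. Fix $E$ with $L(E)>0$. Since $\cM^E$ is uniform, $\tfrac1n\log\|\cM^E_n(\omega)\|\to L(E)$ uniformly in $\omega\in\Omega$, so there is $N_0\in\bbN$ with $\|\cM^E_n(\omega)\|\ge e^{nL(E)/2}$ for all $n\ge N_0$ and all $\omega\in\Omega$. For $n\le-1$ one has $\cM^E_n(\omega)=\big(\cM^E_{-n}(T^{n}\omega)\big)^{-1}$, and since $\|A^{-1}\|=\|A\|$ for every $A\in\mathrm{SL}(2,\bbR)$ (its two singular values are reciprocal), the same bound holds for all $|n|\ge N_0$; for $|n|<N_0$ we have $\|\cM^E_n(\omega)\|\ge1$ trivially. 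With $\lambda:=e^{L(E)/2}>1$ and $C:=\lambda^{-N_0}$ this yields $\|\cM^E_n(\omega)\|\ge C\lambda^{|n|}$ for all $n\in\bbZ$, $\omega\in\Omega$, i.e. $E\in\mathcal{UH}$. Combined with the previous paragraph, $\sigma(H_\omega)\subseteq\cZ$ for every $\omega$.

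It remains to prove $\cZ\subseteq\sigma(H_\omega)$ for each fixed $\omega$. Let $E\in\cZ$, so $L(E)=0$; by uniformity $\tfrac1n\log\|\cM^E_n(\omega)\|\to0$, hence for every $\kappa>0$ there is $C_\kappa>0$ with $\|\cM^E_n(\omega)\|\le C_\kappa e^{\kappa|n|}$ for all $n\in\bbZ$ (again using $\|A^{-1}\|=\|A\|$ at negative times). Pick a unit vector $v\in\bbR^2$ and let $u_0$ be the solution of $\tau_\omega u_0=Eu_0$ with $u_0,\mu_\omega u_0'\in AC_{\loc}(\bbR)$ whose Cauchy data propagate through the transfer cocycle as in the computation around \eqref{n4.1} and \eqref{4.2a}, namely $\big(f(T^{n+1}\omega)u_0(n^+),\,u_0'(n^+)\big)=\cM^E_n(\omega)v$. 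Since $f$ takes finitely many positive integer values (so $\inf f\ge1$ and $\sup f<\infty$) and on each interval $(n,n+1)$ the function $u_0$ solves $-u_0''=Eu_0$ with Cauchy data controlled by the displayed vectors, one obtains $|u_0(x)|\le\wti C_\kappa e^{\kappa|x|}$ for all $x\in\bbR$. As $\kappa>0$ is arbitrary, $u_0$ is a nontrivial, subexponentially bounded solution satisfying every vertex condition, and Lemma \ref{lemma2.2} gives $E\in\sigma(H_\omega)$. Combining the two inclusions yields $\sigma(H_\omega)=\cZ$ for all $\omega\in\Omega$, as asserted.

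The only step demanding genuine care is this last one: converting uniform subexponential control of the transfer-matrix norms into a subexponential pointwise bound for an honest solution on all of $\bbR$ that respects each vertex condition, so that the deterministic Sch'nol-type Lemma \ref{lemma2.2} applies to every individual $\omega$ (not merely $\nu$-almost every one). The remaining two inclusions are essentially bookkeeping, resting on the identity $\|A^{-1}\|=\|A\|$ in $\mathrm{SL}(2,\bbR)$ and on the Proposition already in hand.
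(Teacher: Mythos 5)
Your proposal is correct and follows essentially the same route as the paper: use \eqref{n1.21} together with the fact that uniformity forces $\mathcal{NUH}=\emptyset$ to get $\sigma(H_\omega)\subseteq\cZ$, and then combine uniform subexponential growth of the transfer matrices at energies in $\cZ$ with the Sch'nol-type Lemma \ref{lemma2.2} for the reverse inclusion. The only difference is that you spell out the standard implication ``uniform and $L(E)>0$ $\Rightarrow$ uniformly hyperbolic'' and the passage from cocycle bounds to pointwise solution bounds, which the paper leaves implicit.
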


	\begin{proof}
		By assumption we have $\cN\cU\cH=\emptyset$. Therefore, due to \eqref{n1.21} it is enough to show that $\cZ\subset \sigma(H_{\omega})$ for all $\omega\in\Omega$. The latter follows form the fact that every solution of $\tau_{\omega}u=Eu$ is subexponentially bounded for every $E\in\cZ$, $\omega\in\Omega$ (since $\cM^E$ is uniform) and Lemma \ref{lemma2.2}.
	\end{proof}
	
	Next, we switch to Kotani's description of $\Sigma_{ac}$. Recall that the essential closure of a set $S\subset \bbR$ is given by
	\[\overline{S}^{ess}:=\{E\in\bbR: |S\cap(E-\varepsilon, E+\varepsilon)|>0\text{ for every}\ \varepsilon>0\}.\]

	\begin{theorem}\lb{theorem4.4}\emph{(Ishii--Pastur--Kotani)}
	The almost sure absolutely continuous spectrum of $H_{\omega}$ is given by the essential closure of the zero set of the Lyapunov exponent, that is, $\Sigma_{ac}=\overline{\cZ}^{ess}$. Moreover, for $\nu$-almost every $\omega$, one has
	\begin{equation}\lb{4.9}
	m_-(\omega, E+i 0)=-\overline{m_+(\omega, E+i0)},\ \emph{Leb-a.e.}\  E\in\cZ.
	\end{equation}
	\end{theorem}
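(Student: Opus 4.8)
The plan is to derive this from the classical Ishii--Pastur--Kotani machinery once we have placed the operator $H_\omega$ into the standard framework of ergodic Sturm--Liouville operators on the line. First I would recall that the differential expression $\tau_\omega$ in impedance (Sturm--Liouville) form $-\frac{1}{\mu_\omega}(\mu_\omega u')'$ can be transformed, via the Liouville-type substitution and the transfer-matrix reduction already carried out in \eqref{n4.1}--\eqref{4.2a}, into a discrete-in-$n$ cocycle problem over $(\Omega,T)$: solutions of $\tau_\omega u = Eu$ are in one-to-one correspondence with orbits of the $\mathrm{SL}(2,\bbR)$-cocycle $\cM^E_n(\omega)$, and the Lyapunov exponent $L(E)$ governs their almost sure exponential growth rate. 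Ergodicity of $(\Omega,T,\nu)$ and Kingman's theorem (already invoked in \eqref{nn1.20}) give the deterministic Lyapunov exponent; the integrated density of states exists by the usual subadditive/superadditive argument. The Thouless formula then relates $L(E)$ to the IDS via a Hilbert transform, which is the analytic backbone of what follows.

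The inclusion $\Sigma_{ac}\subset\overline{\cZ}^{ess}$ is the Ishii--Pastur half: for Lebesgue-a.e.\ $E$ in the a.c.\ spectrum the transfer matrices cannot grow, so $L(E)=0$; taking essential closures and using that $\Sigma_{ac}$ is an essential-closure-type set (supported up to measure zero on the set of a.c.\ spectrum) yields the containment. Concretely one invokes the Ruelle--Osceledec / Pastur argument that $\{E: L(E)>0\}$ supports no a.c.\ spectrum, so $\Sigma_{ac}\subseteq\overline{\{L(E)=0\}}^{ess}=\overline{\cZ}^{ess}$. For this step I would cite the impedance-form version of these results, e.g.\ as developed for Sturm--Liouville operators in \cite{T09} and the general ergodic framework of \cite{CL, D2}, noting that our $\mu_\omega$ takes values in a finite set and is bounded above and below, so all the standard hypotheses (uniform ellipticity, boundedness) hold.

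The reverse inclusion $\overline{\cZ}^{ess}\subset\Sigma_{ac}$ together with the reflectionless identity \eqref{4.9} is the Kotani half, and this is where the real work lies. The key input is Kotani's theorem: for a.e.\ $\omega$, at a.e.\ energy $E$ with $L(E)=0$, the $m$-functions $m_\pm(\omega,\cdot)$ extend continuously to $E+i0$ with $\Re m_-(\omega,E+i0) + \Re\big({-\overline{m_+(\omega,E+i0)}}\big)$-type cancellation, i.e.\ precisely $m_-(\omega,E+i0)=-\overline{m_+(\omega,E+i0)}$; this is the reflectionless (Kotani) property on $\{L=0\}$. The proof proceeds by the standard variational/entropy argument: one shows $\int_\cZ \Im m_+(\omega,E+i0)\,dE$ is controlled using the upper semicontinuity of the Lyapunov exponent and the fact that $L=0$ forces the Weyl disks to have nonzero radius in the limit, whence both half-line problems are in the limit-point case with boundary values that must match up to reflection. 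Since $m_+$ is Herglotz and $\Im m_+(\omega,E+i0)>0$ on a set of positive measure inside $\cZ$, the a.c.\ part of the spectral measure has an essential support that is (up to null sets) $\cZ$, giving $\overline{\cZ}^{ess}\subseteq\Sigma_{ac}$. The main obstacle I anticipate is verifying that the impedance-form operator \eqref{3.25} genuinely satisfies the hypotheses of Kotani theory as usually stated for $-d^2/dx^2+V$ or for Jacobi matrices: one must either cite a version formulated directly for Sturm--Liouville operators in impedance form (such as in \cite{CL} or the canonical-systems literature), or perform the reduction to canonical systems and check that the coefficient matrix is trace-normed and the ergodic structure is preserved. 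Given that our $\mu_\omega$ is piecewise constant with finitely many values, this reduction is clean, and the uniform bounds $0<\inf_n s_n s_{n+1}\le \sup_n s_n s_{n+1}<\infty$ ensure no degeneracy; I would therefore devote the bulk of the written proof to making this transfer of Kotani theory explicit and then quoting the reflectionless identity \eqref{4.9} as its standard consequence.
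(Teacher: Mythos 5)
Your overall strategy is the right one and matches the paper's in spirit: Theorem \ref{theorem4.4} is not proved from scratch there either, but obtained by transferring Kotani's original argument \cite{Ko} to the impedance-form family \eqref{3.25}. The difference is in how the transfer is effected. You propose either to locate a ready-made Sturm--Liouville/canonical-systems version of Kotani theory or to reduce $\tau_\omega$ to a trace-normed canonical system; the paper instead adapts Kotani's proof directly and isolates exactly two points where the argument must be modified: (i) the ordinary derivatives $\psi'_\lambda,\varphi'_\lambda$ appearing throughout \cite{Ko} must be replaced by the quasi-derivatives $\mu_\omega\psi'_\lambda,\mu_\omega\varphi'_\lambda$, which are the objects that are locally absolutely continuous here; and (ii) the uniform boundedness of $m_\pm$ on compact subsets of $\bbC_+$ (Kotani's Lemma 1.2), which you gesture at via a ``variational/entropy argument,'' is obtained instead from the continuity of the $m$-functions with respect to the coefficient sequence (Lemma \ref{lemma3.5}) together with compactness of $\Omega$, exactly as in \cite[Lemma 6.2]{LSS}. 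Point (ii) is the one substantive ingredient your outline does not identify: it is precisely why the paper proves Lemma \ref{lemma3.5} beforehand, and it is what makes the hypotheses of Kotani's machinery verifiable without any detour through canonical systems. Your canonical-systems route would presumably also work (the coefficient is bounded above and below, so the reduction is nondegenerate), but it is heavier than needed; the Thouless formula you invoke plays no role in either version. With item (ii) supplied, your proposal becomes a correct, if more roundabout, proof.
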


	Kotani's original proof is formally recorded for Schr\"odinger operators with ergodic $L^1_{loc}(\bbR)$ potentials, see \cite{Ko}. However, it extends directly to the ergodic family $H_{\omega}$ defined in \eqref{3.25}. Let us indicate the only two nontrivial adjustments to be made in Kotani's text. First, since we are dealing with Sturm--Liouville operators in impedance form, the derivatives $\psi'_{\lambda}, \varphi'_{\lambda}$ of the Dirichlet and Neumann solutions (here we use Kotani's notation, see \cite[$\S$1]{Ko}) should be replaced by the quasi-derivatives $\mu_{\omega}\psi'_{\lambda}, \mu_{\omega}\varphi'_{\lambda}$ which are locally absolutely continuous. Second, the uniform boundedness of the Weyl--Titchmarsh functions $m_{\pm}$ discussed in \cite[Lemma 1.2]{Ko} follows (exactly as in \cite[Lemma 6.2]{LSS}) from the continuity of $m-$functions with respect to $\omega$ which has been established in Lemma \ref{lemma3.5}.
	
	\subsection{Cantor Spectrum For the Full Line Model} In this section we specialize the dynamical system $(\Omega, T)$ to the class of minimal subshifts over $\cJ$ satisfying the Boshernitzan condition (B), and $f(\omega):=\omega_0$.

	\begin{theorem}\lb{theorem4.5} Let $(\Omega,T)$ be an aperiodic, minimal subshift over $\cJ$ satisfying Boshernitzan's condition \eqref{BC}. Then there exists a generalized Cantor set $\Sigma\subset\bbR$ of Lebesgue measure zero such that
	\begin{equation}
	\sigma(H_{\omega})=\Sigma\text{\ for all\ }\omega\in\Omega.
	\end{equation}
	\end{theorem}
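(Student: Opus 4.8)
The strategy is to combine the dynamical description of the spectrum (Lemma~\ref{lemma4.3}), Kotani theory (Theorem~\ref{theorem4.4}), the restriction-map machinery (Theorem~\ref{thm3.6}), and a rigidity argument showing that $|\cZ|>0$ would force periodicity of the sphere numbers, contradicting aperiodicity. First I would invoke the result of Damanik--Lenz (see \cite{DL1,DL2}) that a minimal subshift satisfying Boshernitzan's condition~\eqref{BC} makes every locally constant $\mathrm{SL}(2,\bbR)$-cocycle over $(\Omega,T)$ uniform; since $E\mapsto\cM^E$ given by \eqref{4.2aa} is locally constant (it depends on $\omega$ only through $f(\omega)=\omega_0$ and $f(T^2\omega)=\omega_2$), the cocycle $(\omega,n)\mapsto\cM^E_n(\omega)$ is uniform for every $E\in\bbR$. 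By Lemma~\ref{lemma4.3} this yields $\sigma(H_\omega)=\cZ$ for all $\omega\in\Omega$, so it remains only to prove that $|\cZ|=0$ and that $\cZ$ is a generalized Cantor set.

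For the zero-measure claim, suppose for contradiction that $|\cZ|>0$. By Theorem~\ref{theorem4.4}, for $\nu$-a.e.\ $\omega\in\Omega$ one has $m_-(\omega,E+i0)=-\overline{m_+(\omega,E+i0)}$ for Lebesgue-a.e.\ $E\in\cZ$; in the notation of Theorem~\ref{thm3.6} this says $\nu(\cD(\cZ))=1$, and in particular $\cD(\cZ)\neq\varnothing$. By Theorem~\ref{thm3.6}(iii), $\cD(\cZ)$ is closed and $T$-invariant, hence by minimality of $(\Omega,T)$ we get $\cD(\cZ)=\Omega$. Now fix any $s\in\Omega$ and apply Theorem~\ref{thm3.6}(i)--(ii): the restriction map $\cR:\cR_-(\Omega)\to\cR_+(\Omega)$ is a well-defined bijection, so the one-sided piece $\{s_n\}_{n\le 0}$ determines $\{s_n\}_{n\ge 0}$ uniquely. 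Composing with the shift $T$ (which by $T$-invariance of $\cD(\cZ)=\Omega$ also carries $\cR$ to itself), one sees that $\{s_n\}_{n\le k}$ determines $s_{k+1}$ for every $k\in\bbZ$; iterating, the left half-line $\{s_n\}_{n\le 0}$ determines the entire two-sided sequence. Since $\cJ$ is finite, there are only finitely many values for $s_1$ given a prescribed left tail, but more is true: the map sending a left tail to the next symbol, together with uniform continuity of $\cR$, forces the sequence to be eventually periodic to the left; by minimality this makes $s$ periodic. (Concretely: $\cR$ being a homeomorphism of the compact sets $\cR_\pm(\Omega)$ intertwining the natural shift-like dynamics forces $(\Omega,T)$ to be a finite system, i.e.\ periodic.) This contradicts aperiodicity of $(\Omega,T)$, so $|\cZ|=0$; set $\Sigma:=\cZ$.

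Finally, to see that $\Sigma=\cZ$ is a generalized Cantor set, note it is closed (as the spectrum of a self-adjoint operator), it has empty interior since $|\Sigma|=0$, and by minimality of the subshift together with the standard gap-labelling / absence-of-isolated-points arguments for ergodic Schr\"odinger-type operators (cf.\ \cite{CL,DF}) $\Sigma$ has no isolated points; moreover $\Sigma$ is independent of $\omega$ by Lemma~\ref{lemma4.3}. Hence $\Sigma$ is a nonempty perfect nowhere-dense set of zero Lebesgue measure, i.e.\ a generalized Cantor set, and $\sigma(H_\omega)=\Sigma$ for every $\omega\in\Omega$. I expect the main obstacle to be the rigidity step: carefully extracting periodicity of $(\Omega,T)$ from the existence of a well-defined, uniformly continuous bijective restriction map $\cR$ between the one-sided projections $\cR_\mp(\Omega)$ — this is where the local Borg--Marchenko result~Theorem~\ref{thm1.4}, the Ishii--Pastur--Kotani identity~\eqref{4.9}, and minimality must be combined most delicately, and where one must rule out the a~priori possibility that $\cR$ is a nontrivial homeomorphism of an infinite aperiodic hull.
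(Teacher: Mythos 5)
Your plan reproduces the paper's proof essentially verbatim: uniformity of the locally constant cocycle via Damanik--Lenz, Lemma~\ref{lemma4.3} to get $\sigma(H_\omega)=\cZ$, Kotani's identity \eqref{4.9} to place almost every $\omega$ in $\cD(\cZ)$, the uniformly continuous restriction map $\cR$ to force finiteness of the hull and hence periodicity (contradicting aperiodicity), and finally the a.s.\ absence of discrete spectrum to rule out isolated points. The one detail to add in the rigidity step is that the weight $\mu_\omega$, hence the $m$-functions and $\cR$, only see the product sequence $\{f(T^n\omega)f(T^{n+1}\omega)\}_{n\in\bbZ}$ rather than $\{s_n\}$ itself, so after concluding that the product sequence is periodic with some period $p$ you still need the short extra observation (made explicit in the paper, with period bound $q\leq 2|\cJ|^2 p$) that $\{s_n\}$ is then also periodic, which is what actually contradicts aperiodicity of $(\Omega,T)$.
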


	\begin{proof}
    Since $(\Omega,T)$ satisfies (B) and since $\cM^E$ is locally constant we get that it is uniform for every $E\in\bbR$ by \cite[Theorem 1]{DL1}. Therefore, Lemma~\ref{lemma4.3} is applicable in the present setting and one has $\sigma(H_{\omega})=\cZ$ for all $\omega\in\Omega$.

    We claim that $\cZ$ has zero Lebesgue measure. Seeking contradiction let us assume that $|\cZ|>0$. By Theorem~\ref{theorem4.4} the assertion \eqref{4.9} holds for $\omega\in\wti\Omega\subset\Omega$, $\nu(\wti\Omega)=1$. In other words, $\wti\Omega\subset\cD(\cZ)$. Then combining Theorem~\ref{thm3.6} and $|\cZ|>0$, we infer that the mapping
    \begin{equation}
    \cR: \{f(T^n\omega) f(T^{n+1}\omega)\}_{n<0}\mapsto \{f(T^n\omega) f(T^{n+1}\omega)\}_{n\geq0},\ \omega\in\wti\Omega,
    \end{equation}
    is well-defined and continuous with respect to the discrete metric $d$, cf.~\eqref{3.21}.  Pick an $\omega\in\wti\Omega$ and denote $s_n:=f(T^n\omega)$ for $n\in\bbZ$. By uniform continuity of $\cR$ there exists $k=k(\cJ)\in\bbN$ (which is $\omega$-independent) such that the collection  $\{s_{-k}s_{-k+1}; ...; s_{-1}s_{0}\}$ uniquely determines $s_{0}s_{1}$. Shifting and repeating this argument, we deduce that $\{s_{-k}s_{-k+1}; ...; s_{-1}s_{0}\}$  determines the entire sequence $\{s_{n}s_{n+1}\}_{n\in\bbZ}$. Thus
    \begin{equation}
    \#\{\{f(T^n\omega) f(T^{n+1}\omega)\}_{n\in\bbZ}: \omega\in\Omega \}\leq |\cJ|^{k+1}<\infty.
    \end{equation}
    In particular, the sequence $\{s_{n}s_{n+1}\}_{n\in\bbZ}$ is periodic with some period $p$. Then since $\cJ$ is finite, the sequence $\{s_{n}\}_{n\in\bbZ}$ is periodic with period $q\leq 2|\cJ|^2p$ (in general $p<q$, e.g.: $...141222\  141222\  ...$ here $p=3$ and $q=6$). This contradicts the fact the $\Omega$ is an aperiodic subshift. Therefore, $|\Sigma|=|\cZ|=0$, in particular the closed set $\Sigma$ does not contain intervals, hence, it is nowhere dense. Moreover, $\nu$ almost surely, the discrete spectrum spectrum $\sigma_{\emph{disc}}(H_{\omega})$ is empty, cf.~\cite[Proposition V.2.8]{CL}, so $\Sigma$ does not have isolated points, hence, it is a generalized Cantor set as asserted.
	\end{proof}
\section{Spectral Decomposition of the Two-sided Fibonacci Graph}
In this section we focus on the two-sided version of the graph $\Gamma$ corresponding to an element $\omega$ of the Fibonacci subshift $\Omega$ introduced in Example \ref{ex2.3}, see Figure \ref{Fig2}. 
\begin{figure}[h]
	\centering
	\begin{tikzpicture}
	\node[fill=black,circle,scale=0.3](1) at (1,0){};
	\node[fill=black,circle,scale=0.3](2) at (2,0.7){};
	\node[fill=black,circle,scale=0.3](3) at (2,-0.7){};
	\node[fill=black,circle,scale=0.3](4) at (3,0){};
	\node[fill=black,circle,scale=0.3](5) at (4,0){};
	\node[fill=black,circle,scale=0.3](6) at (5,0.7){};
	\node[fill=black,circle,scale=0.3](7) at (5,-0.7){};
	\node[fill=black,circle,scale=0.3](8) at (6,0){};
	\node[fill=black,circle,scale=0.3](9) at (6,0){};
	\node[fill=black,circle,scale=0.3](10) at (7,0.7){};
	\node[fill=black,circle,scale=0.3](11) at (7,-0.7){};
	\node[fill=black,circle,scale=0.3](12) at (8,0){};
	\node[fill=black,circle,scale=0.3](13) at (9,0){};
	\draw[-,thin] (1) to (2);
	\draw[-,thin] (1) to (3);
	\draw[-,thin] (2) to (4);
	\draw[-,thin] (3) to (4);
	\draw[-,thin] (4) to (5);
	\draw[-,thin] (5) to (6);
	\draw[-,thin] (5) to (7);
	\draw[-,thin] (6) to (8);
	\draw[-,thin] (7) to (8);
	\draw[-,thin] (9) to (10);
	\draw[-,thin] (9) to (11);
	\draw[-,thin] (10) to (12);
	\draw[-,thin] (11) to (12);
	\draw[-,thin] (12) to (13);
	\filldraw (0,0) circle (.7pt);
	\filldraw (.2,0) circle (.7pt);
	\filldraw (.4,0) circle (.7pt);
	\filldraw (9.8,0) circle (.7pt);
	\filldraw (9.4,0) circle (.7pt);
	\filldraw (9.6,0) circle (.7pt);
	\end{tikzpicture}
	\caption{Two-sided Fibonacci graph,\ $s=\{...12112121...\}$}\lb{Fig2}
\end{figure}
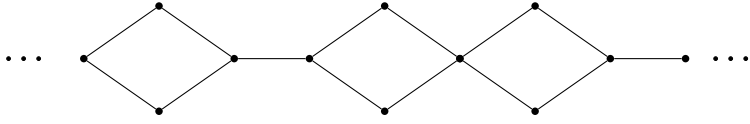

First, we discuss the spectral properties of Fibonacci hamiltonians. Let us recall relevant notions and fix notation. Let $s\in\{1,2\}^{\bbZ_+}$ be the one-sided sequence invariant under the Fibonacci substitution $\cS$, cf. Example \ref{ex2.3}, and let $\omega(s)\in\Omega$ be a two-sided sequence which matches $s$ on $\bbZ_+$. Let  $F_n$ be  the $n$-th element of the Fibonacci sequence with $F_0:=1$, $F_1:=2$ (note that $F_n=|\cS^n(1)|$ ). Setting $f(\omega):=\omega_0$ in \eqref{4.2aa} we denote $M(n,E):=\cM^E_{F_n}(\omega(s))$, $n\geq0$, cf. \eqref{4.2a}. The key to the spectral analysis of Fibonacci hamiltonians is the following recurrence relation
\begin{equation}\lb{7.5}
M(n+1,E)=M(n-1, E)M(n,E), n\geq 3. 
\end{equation}
We stress that the transfer matrix \eqref{4.2aa} depends on the window (\fbox{${\omega_0, \omega_1, \omega_2}$}...)  of size {\it three}. Consequently, $M(n, E)$ depends on the first $F_n+2$ elements of $s$. Thus, the recurrent relation
\begin{equation}\lb{100}
\cS^{n+1}(1)=\cS^{n}(1)\cS^{n-1}(1),
\end{equation}
does not automatically yield \eqref{7.5} for {\it all} $n\geq 1$. However, it does imply that the extra two elements required in the definition of $M(n, E)$ are $n$-independent and, in fact, given by\[s_{F_n+1}=1,\ s_{F_n+2}=2,\ n\geq 2.\]  
Now, this observation together with \eqref{100} do yield \eqref{7.5}. 
\begin{theorem}\lb{thm5.1}
Let $\Omega$ be the Fibonacci subshift. Then $\sigma_{pp}(H_{\omega})=\emptyset$ for all $\omega\in\Omega$.  In fact, the spectrum of $H_{\omega}$ is purely singular continuous. 
\end{theorem}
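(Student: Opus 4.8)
The plan is to combine the zero-measure result already available for $H_\omega$ with the absence of eigenvalues, the latter being established via the classical trace-map machinery for the Fibonacci transfer matrices. By Theorem~\ref{theorem4.5} the spectrum $\Sigma=\sigma(H_\omega)$ is a fixed generalized Cantor set of Lebesgue measure zero; in particular $|\Sigma|=0$, so $\Sigma$ supports no absolutely continuous spectrum and $\sigma_{ac}(H_\omega)=\emptyset$ for every $\omega\in\Omega$. Thus the entire content of the theorem reduces to showing $\sigma_{pp}(H_\omega)=\emptyset$, i.e.\ that $H_\omega$ has no eigenvalues; once this is known, $\sigma(H_\omega)=\sigma_{sc}(H_\omega)=\Sigma$ and the spectrum is purely singular continuous.

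To rule out eigenvalues I would exploit the recurrence \eqref{7.5}, $M(n+1,E)=M(n-1,E)M(n,E)$, and the induced \emph{trace map}: setting $x_n(E):=\tfrac12\tr M(n,E)$, the identity $\tr(AB)+\tr(AB^{-1})=\tr A\,\tr B$ for $\mathrm{SL}(2,\bbR)$ matrices yields the Fibonacci trace recursion $x_{n+1}=2x_nx_{n-1}-x_{n-2}$, together with the invariant $I(E):=x_{n+1}^2+x_n^2+x_{n-1}^2-2x_{n+1}x_nx_{n-1}-1$, which is independent of $n$ for $n$ large enough (once \eqref{7.5} kicks in), and which one computes to be a fixed nonnegative constant $I_0\geq 0$ depending only on the graph data (the two values in $\cJ=\{1,2\}$ and the fixed ``extra'' symbols $s_{F_n+1}=1$, $s_{F_n+2}=2$). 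One then runs the standard Súto-type dichotomy: for $E\in\bbR$ either the trace sequence $(x_n(E))$ is unbounded — in which case a Gordon-type / escaping-orbit argument shows the transfer matrices grow on a subsequence in a way incompatible with an $\ell^2$ solution, so $E\notin\sigma$ — or it is bounded, which (since $\Sigma$ is exactly the set of bounded-trace energies) characterizes $\Sigma$; and on $\Sigma$ one shows the solutions cannot be square-integrable at $+\infty$. The cleanest route to the latter is to transfer, via Lemma~\ref{lemma2.2} and the explicit matrix cocycle $\cM^E$, the known absence of eigenvalues for the discrete Fibonacci Hamiltonian (Súto; see \cite{D1,D2}): for $E\in\Sigma$ the Fibonacci trace orbit stays bounded but does not converge, which forces every solution of $\tau_\omega u=Eu$ to be bounded below away from $0$ along the sequence of endpoints $\{F_n\}$, precluding $u\in L^2(\bbR_+;\mu_\omega)$; combined with the analogous statement on $\bbR_-$ (by minimality the same trace-map analysis applies to the left half-line restriction), no solution is in $L^2(\bbR;\mu_\omega)$, hence $E$ is not an eigenvalue.

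Concretely the steps, in order, are: (1) record $\sigma_{ac}(H_\omega)=\emptyset$ from $|\Sigma|=0$; (2) derive the trace map and its invariant $I_0$ from \eqref{7.5}; (3) prove the dichotomy — unbounded trace orbit implies $E\notin\Sigma$ (and such $E$ are trivially non-eigenvalues), bounded trace orbit with $I_0>0$ implies nonconvergence of $(x_n)$; (4) translate nonconvergence of the trace orbit into a uniform lower bound $\|\cM^E_{F_n}(\omega)v\|\gtrsim 1$ for every unit vector $v$ along $n\to\infty$, hence no solution decays, hence no $L^2$ solution, hence no eigenvalue on $\Sigma$; (5) conclude $\sigma_{pp}(H_\omega)=\emptyset$ and therefore $\sigma(H_\omega)=\sigma_{sc}(H_\omega)=\Sigma$.

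The main obstacle is Step~(4): unlike the discrete Fibonacci Hamiltonian, here the transfer cocycle $\cM^E$ in \eqref{4.2aa} is built from a \emph{three-symbol} window and carries $E$-dependent conjugating factors $f(\omega)/f(T^2\omega)$, so the reduction to Súto's theorem is not verbatim; one must check that these bounded conjugations do not spoil the equivalence between boundedness of the cocycle orbit and boundedness of the trace orbit, and that a bounded non-decaying cocycle orbit genuinely obstructs $L^2$-membership of the corresponding impedance-form solution at the vertices — this is where Lemma~\ref{lemma2.2} and the quasi-derivative matching from Remark~\ref{1.1nn} do the bookkeeping. A secondary point to verify is that the invariant $I_0$ is strictly positive for the relevant $E$ (it is, away from a trivial energy, by a direct computation with the two base matrices), since $I_0=0$ is the only way the trace orbit could converge without being eventually periodic.
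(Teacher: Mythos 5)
Your overall strategy is the one the paper follows: derive the trace map \eqref{7.6} from \eqref{7.5}, note that $|\Sigma|=0$ (Theorem~\ref{theorem4.5}) already kills the absolutely continuous part, and exclude eigenvalues on the bounded-trace set $B=\{E:\sup_n|x_n(E)|<\infty\}$ by the standard Gordon--S\"ut\H{o} two-block argument (\cite[Proposition 2]{Su}). The window-of-three issue you flag at the end is real, and the paper resolves it before the theorem by checking that $s_{F_n+1}=1$, $s_{F_n+2}=2$ are $n$-independent, so that \eqref{7.5} holds for $n\geq 3$.

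The genuine gap is the inclusion $\Sigma\subset B$. You take it as known (``since $\Sigma$ is exactly the set of bounded-trace energies''), and the substitute justification you sketch is logically invalid: showing that for $E\notin B$ the transfer matrices grow ``in a way incompatible with an $\ell^2$ solution'' would at best show that such $E$ is not an eigenvalue; it does not show $E\notin\sigma(H_\omega)$, and it is precisely the inclusion $\sigma(H_\omega)\subset B$ that your Step (4) needs, since Gordon's lemma only disposes of energies in $B$. (Absence of a decaying solution never removes a point from the spectrum; compare Lemma~\ref{lemma2.2}, which runs in the opposite direction.) The paper closes this gap in two steps: first, the periodic approximants $H_k$ obtained by periodizing $\cS^k(1)$ satisfy $\Sigma\subset\bigcap_{n\geq0}\overline{\bigcup_{k\geq n}\sigma(H_k)}$, via a uniform resolvent bound on compactly supported elements of a core; second, for $E\notin B$ the escape criterion \eqref{7.2}--\eqref{7.3} from \cite[Proposition 12.8.6]{S05} holds on a whole neighborhood of $E$ uniformly in $k$, so Floquet--Bloch theory keeps $E$ away from $\sigma(H_k)$ for all large $k$, giving $\overline{\bigcup_{k\geq n}\sigma(H_k)}\subset B$. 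Your Fricke-invariant considerations ($I_0>0$, nonconvergence of bounded orbits) are not needed on this route and do not substitute for it; and your Step (4) is better phrased as the two-block Gordon estimate at the scales $F_n$ than as a ``transfer of S\"ut\H{o}'s theorem,'' though that part is a matter of exposition rather than substance.
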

\begin{proof}Define \[x_n(E):=\frac{1}{2}\text{tr}(M(n,E)).\]
	Then \eqref{7.5} together with the Cayley--Hamilton theorem yield
	\begin{equation}\lb{7.6}
	x_{n+1}(E)=2x_{n}(E)x_{n-1}(E)-x_{n-2}(E), n\geq 4.
	\end{equation}
Let 
\[B:=\big\{E\in\bbR: \sup_{n\in\bbZ_+}|x_n(E)|<+\infty \big\}. \]It is enough to prove that $\Sigma\subset B$ (see Theorem \ref{theorem4.5} for definition of $\Sigma$) and that $B$ contains no eigenvalues of $H_{\omega(s)}$. Let $H_n$ denote the periodic hamiltonian corresponding to the sequence of sphere numbers obtained by two-sided periodic extension of the patch $\cS^n(1)$. We claim that 
\begin{equation}\lb{7.1}
\Sigma\subset \bigcap_{n\geq 0}\overline{\bigcup_{k\geq n} \sigma(H_k) }\subset B.
\end{equation} 
Let us prove the first inclusion in \eqref{7.1}. Fix $n\geq 0$ and suppose that \[E\not\in \overline{\bigcup_{k\geq n} \sigma(H_k) },\] then
\begin{equation}
\varkappa:=\sup_{k\geq n}\|(H_k-E)^{{-1}}\|_{\cB(L^2(\bbR))}<+\infty.
\end{equation}
Thus, for every compactly supported $\psi\in\dom(H_{\omega(s)})$ we have $\psi \in\dom(H_k)$ and
\[\|H_{k}\psi-E\psi \|_{L^2(\bbR)}\geq\varkappa^{-1}\|\psi\|_{L^2(\bbR)}, \]
for sufficiently large $k$. For such $k$ we have $\|H_{k}\psi-E\psi \|_{L^2(\bbR)}=\|H_{\omega(s)}\psi-E\psi \|_{L^2(\bbR)}$, and infer
\begin{equation}\lb{5.1new}
\|H_{\omega(s)}\psi-E\psi \|_{L^2(\bbR)}\geq\varkappa^{-1}\|\psi\|_{L^2(\bbR)}.
\end{equation}
Since the set of compactly supported functions from $\dom(H_{\omega(s)})$ is a core of $H_{\omega(s)}$, the inequality \eqref{5.1new} holds for all $\psi\in\dom(H_{\omega(s)})$. Furthermore, since $H_{\omega(s)}$ is self-adjoint we get $E\not\in \sigma(H_{\omega(s)})=\Sigma$ which proves the first inclusion in \eqref{7.1}. 

 As was noted in the proof of \cite[Proposition 6.3]{DFG}, the assertion \cite[Proposition 12.8.6]{S05} together with \eqref{7.6} yield the following criterion: $E\not\in B$ if and only if there exists $n$ such that 
 \begin{equation}\lb{7.2}
|x_{n+1}(E)|>1, |x_{n}(E)|>1, |x_{n+1}(E)x_{n}(E)|>|x_{n-1}(E)|,
 \end{equation}
  and in this case 
  \begin{equation}\lb{7.3}
|x_k(E)|>1, k\geq n.
  \end{equation}
   To prove the second inclusion in \eqref{7.1} let us fix $E\not\in B$. Since $x_j(E)$ is continuous with respect to $E$, the inequalities in \eqref{7.2}  hold in a small neighborhood of $E$. Thus \eqref{7.3} also holds in the same neighborhood (uniformly for $k\geq n$!). Then by Floquet--Bloch theory $E\not\in \overline{\bigcup_{k\geq n} \sigma(H_k)}$. 
   
   The fact that $\tau_{\omega(s)}u=Eu$ has no square integrable solution for $E\in B$ follows form the standard argument invoking Gordon's lemma and the boundedness  of $x_n(E)$, see, for example, \cite[Proposition 2]{Su}. 
\end{proof}

Our next objective is to obtain the complete spectral decomposition of the Kirchhoff Laplacian corresponding to the two-sided Fibonacci Graph $\Gamma$. To that end, we first note that the decomposition result of Kostenko and Nicolussi \cite[Theorem 3.5]{KN} can be extended to the two-sided setting. Let $\{s_n\}_{n\in\bbZ}\subset\bbN$ be an arbitrary sequence of natural numbers. For each $n\in\bbZ$ let $S_n$ denote a set of  $s_n$ distinct vertices. Let $\Gamma$ be the graph with the set of vertices $\cV:=\cup_{n\in\bbZ} S_n$ and such that two vertices $u,v\in\cV$ are adjacent if and only if
\[(u,v)\in\bigcup_{n\in \bbZ}S_n\times (S_{n-1}\cup S_{n+1}),\] see, e.g., Figure  \ref{Fig2}. 
\begin{proposition}\lb{prop5.2}   The Kirchhoff Laplacian  $\bbH$ on the graph $\Gamma$ introduced above is unitarily equivalent to 
	\begin{equation}
	H\bigoplus_{n\in\bbZ}\oplus _{j=1}^{s_{n}-1}  H_n^1 \oplus\bigoplus_{n\in\bbZ}\oplus_{j=1}^{(s_{n}-1)(s_{n+1}-1)}H_n^2,
	\end{equation}
where  $H_n^1, H_n^2$ are as in \eqref{9.1}, \eqref{9.2}, respectively, and  $H$ is the self-adjoint Sturm--Liouville operator acting in $L^2(\bbR, \mu)$ and corresponding to 
	\begin{align}
\begin{split}
&\tau = -\frac{1}{\mu(x)}\left(\frac{d}{dx}\mu(x)\frac{d}{dx}\right),\\
&\mu(x):=\sum_{n=-\infty}^{\infty}{s}_n {s}_{n+1}\chi_{[n, {n+1})}(x), x\in \bbR.
\end{split}
\end{align}
\end{proposition}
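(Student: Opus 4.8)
The plan is to follow the template of the one-sided decomposition in \cite[Theorem 3.5]{KN} and adapt it to the two-sided setting, exploiting the same symmetry group that permutes the $s_n$ vertices within each sphere $S_n$. First I would describe the relevant symmetry group: for each $n\in\bbZ$ the symmetric group $\mathrm{Sym}(S_n)\cong\mathrm{Sym}(s_n)$ acts on $\Gamma$ by permuting the vertices of $S_n$ and, correspondingly, permuting the edges between $S_{n-1}$ and $S_n$ and between $S_n$ and $S_{n+1}$; the full symmetry group is the (restricted) direct product $G:=\prod_{n\in\bbZ}\mathrm{Sym}(s_n)$, which acts unitarily on $L^2(\Gamma)$ and commutes with $\bbH$. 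Decomposing $L^2(\Gamma)$ into isotypic components for this action yields the orthogonal decomposition of $\bbH$.

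Next I would identify the isotypic components concretely, exactly as in the one-sided case. The totally symmetric component — functions that, on each bundle of parallel edges between consecutive spheres, take the same value on all parallel edges — is naturally isomorphic to $L^2(\bbR,\mu)$ with $\mu(x)=\sum_n s_ns_{n+1}\chi_{[n,n+1)}(x)$, the weight counting the multiplicity $s_ns_{n+1}$ of edges in $[n,n+1)$; on this component $\bbH$ becomes the impedance-form operator $H$ associated with $\tau$. The orthogonal complement splits according to where a function ``first'' fails to be symmetric. Functions supported (in the symmetry-adapted sense) so that they vanish on two consecutive spheres $S_{n-1}$ and $S_{n+1}$ and are antisymmetric across $S_n$ contribute copies of the Dirichlet operator $H_n^1$ on $J_n=(n-1,n+1)$; each sphere $S_n$ contributes $s_n-1$ such copies (the dimension of the standard representation of $\mathrm{Sym}(s_n)$). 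Functions supported on a single edge layer $I_n=(n,n+1)$ and antisymmetric with respect to both $\mathrm{Sym}(s_n)$ and $\mathrm{Sym}(s_{n+1})$ contribute copies of the Dirichlet operator $H_n^2$ on $(n,n+1)$, with multiplicity $(s_n-1)(s_{n+1}-1)$. A dimension count on each sphere/layer confirms these multiplicities exhaust the complement. The Kirchhoff vertex conditions at the vertices of $S_n$ translate, on the symmetric component, into the matching conditions $s_{n-1}f'(n^-)=s_nf'(n^+)$ built into $\dom(H)$, cf. Remark \ref{1.1nn}, while on the antisymmetric components the sum of normal derivatives forces the Dirichlet condition $f(n\mp1)=0$ at the relevant endpoints.

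The verification is essentially bookkeeping, and the only place where the two-sided case is not a verbatim copy of \cite{KN} is the absence of a root vertex: in the one-sided graph the sphere $S_0$ plays a distinguished role (producing the Neumann condition $f'(0)=0$ in $H^+$), whereas here there is no such sphere, so one simply obtains the full-line operator $H$ with no boundary condition. I expect the main (minor) obstacle to be checking that the direct sum is \emph{complete} — that the symmetric component together with the $H_n^1$ and $H_n^2$ pieces really span all of $L^2(\Gamma)$ — which amounts to verifying, layer by layer, that $1+\sum_n(s_n-1)$ ``spherical'' degrees of freedom plus $\sum_n(s_n-1)(s_{n+1}-1)$ ``edge'' degrees of freedom account for the representation theory of $\prod_n\mathrm{Sym}(s_n)$ on the edge space; since this is a local computation at each $n$, identical in form to the one carried out in \cite[Theorem 3.5]{KN}, it poses no real difficulty. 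Finally, one checks that each summand is self-adjoint on the indicated domain and that $\bbH$ acts as $\tau$ on each, which is immediate from the definition of the Kirchhoff Laplacian.
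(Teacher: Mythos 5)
Your proposal is correct and follows essentially the same route as the paper: the paper likewise reduces to the symmetry decomposition $L^2(\Gamma)=\cF_{sym}\oplus(\cF_{sym})^{\perp}$ of \cite[Theorem 3.5]{KN}, identifies the symmetric block with the full-line impedance operator $H$ (noting, as you do, that the only change from the one-sided case is the disappearance of the root and hence of the Neumann condition), and attributes the complement to the $H_n^1$ and $H_n^2$ summands with the same multiplicities $s_n-1$ and $(s_n-1)(s_{n+1}-1)$. Your writeup is in fact somewhat more explicit about the representation-theoretic bookkeeping than the paper, which simply defers the details to \cite{KN}.
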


The proof of this assertion is similar to that of \cite[Theorem 3.5]{KN}. Let us highlight the main ingredient of the proof by example. Let $\bbH$ be the Kirchhoff Laplacian on the two-side  graph $\Gamma$ introduced above and corresponding to an element $\omega$ of the Fibonacci subshift $\Omega\subset\{1,2\}^{\bbZ}$. Such a graph consists of tiles and line segments as shown on Figure \ref{Fig2}.  One has 
\begin{equation}\lb{10}
L^2(\Gamma)=\cF_{sym}\oplus (\cF_{sym})^{\perp},
\end{equation}
where $\cF_{sym}$ consists of $L^2(\Gamma)$ functions that are horizontally symmetric on each tile (for precise definition see \cite[(2.11)]{KN} with ``$n\geq 0$" replaced by ``$n\in\bbZ$", and $s_n\in\{1,2\}$). Then the operator block of $\bbH$ corresponding to $\cF_{sym}$ is unitarily equivalent to the full line Strum--Liouville operator $H$. The operator block of $\bbH$ corresponding to $(\cF_{sym})^{\perp}$ is unitarily equivalent to 
\begin{equation}\lb{5.1a}
\bigoplus_{n\in\bbZ}\oplus _{j=1}^{s_{n}-1}  H_n^1 \oplus\bigoplus_{n\in\bbZ}\oplus_{j=1}^{(s_{n}-1)(s_{n+1}-1)}H_n^2.
\end{equation}
\begin{proposition}Let $\bbH$ be as above. Then the singular continuous subspace of $\bbH$ is $\cF_{sym}$, the pure point subspace is $\cF_{sym}^{\perp}$, and the absolutely continuous part is trivial. 
\end{proposition}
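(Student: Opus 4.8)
The plan is to transport the spectral information already available for the two blocks of \eqref{10} through the given unitary equivalences and then invoke the uniqueness of the decomposition $L^2(\Gamma)=\cH_{ac}(\bbH)\oplus\cH_{sc}(\bbH)\oplus\cH_{pp}(\bbH)$ into absolutely continuous, singular continuous, and pure point spectral subspaces. The first observation is that $\cF_{sym}$ and $(\cF_{sym})^{\perp}$ are reducing subspaces for $\bbH$, so the orthogonal projections onto them commute with every spectral projection of $\bbH$; hence each of the three spectral subspaces of $\bbH$ splits along \eqref{10}, and on $\cF_{sym}$ (resp.\ on $(\cF_{sym})^{\perp}$) the spectral type of $\bbH$ is exactly that of the block $H$ of Proposition~\ref{prop5.2} (resp.\ of the direct sum \eqref{5.1a}), transported via the unitary equivalence recorded after \eqref{10}.

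For the $\cF_{sym}$ block: here $\mu$ is built from the Fibonacci sequence, so $H$ is the full-line operator $H_\omega$ of Section~\ref{section4} with $f(\omega):=\omega_0$, and Theorem~\ref{thm5.1} applies to give that $H$ has purely singular continuous spectrum. Consequently $\cF_{sym}\subseteq\cH_{sc}(\bbH)$, and in particular $\cF_{sym}\cap\cH_{ac}(\bbH)=\{0\}$ and $\cF_{sym}\cap\cH_{pp}(\bbH)=\{0\}$.

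For the $(\cF_{sym})^{\perp}$ block: each $H^1_n$ is a Sturm--Liouville operator on the compact interval $J_n$ with a positive two-step weight, and each $H^2_n$ is a rescaled Dirichlet Laplacian on a unit interval, so both have compact resolvent and hence purely discrete spectrum. A countable orthogonal direct sum of operators with purely discrete spectrum possesses a complete orthonormal system of eigenvectors — the union of the eigenbases of the summands — hence is pure point. Therefore $(\cF_{sym})^{\perp}\subseteq\cH_{pp}(\bbH)$. (As in the proof of Theorem~\ref{main}, many of these eigenvalues have infinite multiplicity, but that is immaterial here; note also that in the Fibonacci word $22$ never occurs, so no $H^2_n$ is actually present and the pure point part comes entirely from the finitely many unitary-equivalence classes of operators $H^1_n$.)

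Combining the two inclusions with $\cF_{sym}\oplus(\cF_{sym})^{\perp}=L^2(\Gamma)=\cH_{ac}(\bbH)\oplus\cH_{sc}(\bbH)\oplus\cH_{pp}(\bbH)$ and the mutual orthogonality of the spectral subspaces forces $\cH_{ac}(\bbH)=\{0\}$, $\cH_{sc}(\bbH)=\cF_{sym}$, and $\cH_{pp}(\bbH)=(\cF_{sym})^{\perp}$, which is the assertion. The only point that needs a moment's care is the reduction argument in the first paragraph — the compatibility of passing to the reducing subspaces $\cF_{sym}$ and $(\cF_{sym})^{\perp}$ with the Lebesgue decomposition of the spectral measure of $\bbH$ — and this is standard; everything else follows immediately from Theorem~\ref{thm5.1} and the elementary structure of the finite-interval operators $H^1_n$ and $H^2_n$.
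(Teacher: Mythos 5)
Your proof is correct and follows essentially the same route as the paper: the paper likewise reads the spectral types off the two blocks of \eqref{10} (purely singular continuous for $H$ by Theorem~\ref{thm5.1}, discrete hence pure point for the direct sum \eqref{5.1a}) and concludes from the orthogonal decomposition. You simply spell out the standard reducing-subspace argument that the paper leaves implicit, and your side remark that $22$ never occurs in the Fibonacci word (so no $H^2_n$ actually appears) is a correct bonus observation.
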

\begin{proof}
The spectra of $H_n^1, H_n^2$ are discrete as before for every $n\in\bbZ$. By Theorem \ref{thm5.1} the spectrum of $H$ is purely singular continuous. Then the orthogonal decomposition \eqref{10} yields the assertion. 
\end{proof}

	
\end{document}